\newcommand{\citet}[2][]{\citeauthor{#2} \cite[#1]{#2}}
\newcommand{\Wiso}{W_{\mathrm{iso}}}
\newcommand{\Wvol}{W_{\mathrm{vol}}}
\DeclareMathOperator{\CSO}{CSO}
\newcommand{\CSOn}{\CSO(n)}
\pgfplotsset{
every axis/.append style={width=0.6\textwidth, height=6cm},
every axis plot/.append style={no markers, thick},
label style={font=\small},
tick label style={font=\small},
legend pos={outer north east},
cycle list={red, green, cyan, yellow}
}
	\newcommand\OmegaSetDefaults{
		\OmegaSetGridSize{.1}{.1}
		\OmegaSetOutlineSampleCount{25}
		\OmegaSetGridSampleCount{5}
		\OmegaSetOutlineToUnitCircle
		\OmegaSetDeformationToId
		\OmegaSetOutlineStyle{thick, color=black}
		\OmegaSetShadingStyle{left color = lightgray, right color = black, opacity = .3}
		\OmegaSetGridStyle{help lines}
		\OmegaSetSmoothOutline
	}
	\newcommand{\OmegaSetOutlineStyle}[1]{
		\tikzset{outlinestyle/.style={#1}}
	}
	\newcommand{\OmegaSetShadingStyle}[1]{
		\tikzset{shadingstyle/.style={#1}}
	}
	\newcommand{\OmegaSetGridStyle}[1]{
		\tikzset{gridstyle/.style={#1}}
	}
	\newcommand{\OmegaSetSmoothOutline}{
		\tikzset{outlineplotstyle/.style={smooth}}
	}
	\newcommand{\OmegaSetOutline}[3]{
		\pgfmathdeclarefunction*{omegagammax}{1}{\pgfmathparse{#1(##1)}}
		\pgfmathdeclarefunction*{omegagammay}{1}{\pgfmathparse{#2(##1)}}
		\pgfmathsetmacro{\OmegaParameterlength}{#3}
	}
	\newcommand{\OmegaSetDeformation}[2]{
		\pgfmathdeclarefunction*{omegaphix}{2}{\pgfmathparse{#1(##1, ##2)}}
		\pgfmathdeclarefunction*{omegaphiy}{2}{\pgfmathparse{#2(##1, ##2)}}
	}
	\newcommand{\OmegaSetGridSize}[2]{
		\pgfmathsetmacro{\OmegaGridx}{#1}
		\pgfmathsetmacro{\OmegaGridy}{#2}
	}
	\newcommand{\OmegaSetOutlineSampleCount}[1]{
		\pgfmathsetmacro{\OmegaOutlineSampleCount}{#1}
	}
	\newcommand{\OmegaSetGridSampleCount}[1]{
		\pgfmathsetmacro{\OmegaGridSampleCount}{#1}
	}
	\newcommand{\OmegaSetOutlineToCircle}[3]{
		\pgfmathdeclarefunction*{circx}{1}{\pgfmathparse{#1 + #3*cos(##1)}}
		\pgfmathdeclarefunction*{circy}{1}{\pgfmathparse{#2 + #3*sin(##1)}}
		\OmegaSetOutline{circx}{circy}{360}
	}
	\newcommand{\OmegaSetOutlineToUnitCircle}{
		\OmegaSetOutlineToCircle{0}{0}{1}
	}
	\newcommand{\OmegaSetDeformationToId}{
		\pgfmathdeclarefunction*{idx}{2}{\pgfmathparse{##1}}
		\pgfmathdeclarefunction*{idy}{2}{\pgfmathparse{##2}}
		\OmegaSetDeformation{idx}{idy}
	}
	\newcommand{\OmegaDraw}{
		\OmegaDrawCustomDeformation{omegaphix}{omegaphiy}
	}
	\newcommand{\OmegaDrawCustomDeformation}[2]{
		\OmegaDrawFull{\OmegaGridx}{\OmegaGridy}{\OmegaParameterlength}{omegagammax}{omegagammay}{#1}{#2}{\OmegaOutlineSampleCount}{\OmegaGridSampleCount}
	}
	\newcommand{\OmegaDrawFull}[9]{	% (#1=gridx, #2=gridy, #3=parameterlength, #4=gammax, #5=gammy,#6= phix, #7=phiy, #8=bordersamples, #9=gridsamples)
		\begin{scope}
			% constants
			\def\conversionFromBoundingBoxUnit{0.03514598035}
			% variables
			\tikzset{borderdomainstyle/.style={domain=0:#3, variable=\t, samples=#8}}
			\tikzset{griddomainstyle/.style={smooth, variable=\t, samples=#9}}
			\newdimen\leftBoundWrongUnit
			\newdimen\rightBoundWrongUnit
			\newdimen\lowerBoundWrongUnit
			\newdimen\upperBoundWrongUnit

			%draw shaded area
			\shade[shadingstyle]
				plot[borderdomainstyle, outlineplotstyle] ({#6(#4(\t), #5(\t))}, {#7(#4(\t), #5(\t))});

			% calculate grid bounds
			\begin{scope}
				% save old bounding box to restore it later (outside the scope)
				\coordinate (oldLowerLeftBound) at (current bounding box.south west);
				\coordinate (oldUpperRightBound) at (current bounding box.north east);

				% create new bounding box
				\pgfresetboundingbox
				\path[use as bounding box]	% undeformed area
					plot[borderdomainstyle, outlineplotstyle] ({#4(\t)}, {#5(\t))});
				\coordinate (lowerLeftBound) at (current bounding box.south west);
				\coordinate (upperRightBound) at (current bounding box.north east);
			\end{scope}
			\pgfextractx{\leftBoundWrongUnit}{\pgfpointanchor{lowerLeftBound}{center}}
			\pgfextractx{\rightBoundWrongUnit}{\pgfpointanchor{upperRightBound}{center}}
			\pgfextracty{\lowerBoundWrongUnit}{\pgfpointanchor{lowerLeftBound}{center}}
			\pgfextracty{\upperBoundWrongUnit}{\pgfpointanchor{upperRightBound}{center}}
			\pgfmathsetmacro{\leftBound}{\conversionFromBoundingBoxUnit*\leftBoundWrongUnit}
			\pgfmathsetmacro{\rightBound}{\conversionFromBoundingBoxUnit*\rightBoundWrongUnit}
			\pgfmathsetmacro{\lowerBound}{\conversionFromBoundingBoxUnit*\lowerBoundWrongUnit}
			\pgfmathsetmacro{\upperBound}{\conversionFromBoundingBoxUnit*\upperBoundWrongUnit}

			% draw grid
			\begin{scope}
				\clip
					plot[borderdomainstyle, outlineplotstyle] ({#6(#4(\t), #5(\t))}, {#7(#4(\t), #5(\t))});

				% draw horizontal lines
				\pgfmathsetmacro{\horizontalGridCount}{(\upperBound-\lowerBound)/#2}
				\foreach \b in {1,2,...,{\horizontalGridCount}}{
					\draw[gridstyle]
						plot[griddomainstyle, domain=\leftBound:\rightBound] ({#6(\t,{\lowerBound+(#2*\b)})},{#7(\t,{\lowerBound+(#2*\b})});
				}

				% draw vertical lines
				\pgfmathsetmacro{\verticalGridCount}{(\rightBound-\leftBound)/#1}
				\foreach \b in {1,2,...,{\verticalGridCount}}{
					\draw[gridstyle]
						plot[griddomainstyle, domain=\lowerBound:\upperBound] ({#6({\leftBound+(#1*\b)},\t)},{#7({\leftBound+(#1*\b)},\t)});
				}
			\end{scope}

			% draw outline
			\draw[outlinestyle]
					plot[borderdomainstyle, outlineplotstyle] ({#6(#4(\t), #5(\t))}, {#7(#4(\t), #5(\t))});

			% restore old bounding box
			\path[use as bounding box] (oldLowerLeftBound) rectangle (oldUpperRightBound);
		\end{scope}
	}
\begin{document}
\title{\vspace*{-2.75em}A note on non-homogeneous deformations with homogeneous Cauchy stress for a strictly rank-one convex energy in isotropic hyperelasticity}
\date{\today}
\author{%
	Eva Schweickert\thanks{%
		Corresponding author: Eva Schweickert,\quad Lehrstuhl f\"{u}r Nichtlineare Analysis und Modellierung, Fakult\"{a}t f\"{u}r Mathematik, Universit\"{a}t Duisburg-Essen, Thea-Leymann Str. 9, 45127 Essen, Germany; email: eva.schweickert@stud.uni-due.de%
	}, \quad%
	L.\ Angela Mihai\thanks{%
		L.\ Angela Mihai, \quad School of Mathematics, Cardiff University, Senghennydd Road, Cardiff, CF24 4AG, UK; email: MihaiLA@cardiff.ac.uk%
	}, \quad%
	Robert J.\ Martin\thanks{%
		Robert J.\ Martin,\quad Lehrstuhl f\"{u}r Nichtlineare Analysis und Modellierung, Fakult\"{a}t f\"{u}r Mathematik, Universit\"{a}t Duisburg-Essen, Thea-Leymann Str. 9, 45127 Essen, Germany; email: robert.martin@uni-due.de%
	} %\\[0.5cm]
	\quad and\quad
	Patrizio Neff\thanks{%
		Patrizio Neff,\quad Head of Lehrstuhl f\"{u}r Nichtlineare Analysis und Modellierung, Fakult\"{a}t f\"{u}r	Mathematik, Universit\"{a}t Duisburg-Essen, Thea-Leymann Str. 9, 45127 Essen, Germany, email: patrizio.neff@uni-due.de%
		}%
}
\maketitle
\vspace*{-2.75em}
\begin{abstract}
	It has recently been shown that for a Cauchy stress response induced by a strictly rank-one convex hyperelastic energy potential, a homogeneous Cauchy stress tensor field cannot correspond to a non-homogeneous deformation if the deformation gradient has discrete values, i.e.\ if the deformation is piecewise affine linear and satisfies the Hadamard jump condition. In this note, we expand upon these results and show that they do not hold for arbitrary deformations by explicitly giving an example of a strictly rank-one convex energy and a non-homogeneous deformation such that the induced Cauchy stress tensor is constant.
	In the planar case, our example is related to another previous result concerning criteria for generalized convexity properties of conformally invariant energy functions, which we extend to the case of strict rank-one convexity. 
\end{abstract}

{\textbf{Key words:} nonlinear elasticity, ellipticity, conformal mappings, Möbius transformations}
\\[.65em]
\noindent\textbf{AMS 2010 subject classification:
	74B20 % nonlinear elasticity
}\\

\vspace{-1.5em}
{\parskip=-0.5mm \tableofcontents}

\section{Introduction}
\label{section:introduction}
In recent contributions \cite{agn_mihai2018hyperelastic,agn_mihai2016hyperelastic} we have exhibited both planar and three-dimensional inhomogeneous (i.e.\ non-affine) deformations for which the associated Cauchy stress tensor is homogeneous (i.e.\ constant). More specifically, we considered configurations with discrete deformation gradients, i.e.\ piecewise affine linear (\enquote{laminate}) deformations satisfying the Hadamard jump condition. The hyperelastic energy potentials which induced a homogeneous Cauchy stress for these deformations happened to be non-elliptic, i.e.\ not rank-one convex on the group $\GLpn$ of invertible matrices with positive determinant. Moreover, we showed that this loss of ellipticity is essential for the observed phenomenon to occur by proving that for a strictly rank-one convex energy function, the Cauchy stress corresponding to a non-trivial laminate (i.e.\ a non-affine but piecewise affine deformation) is never homogeneous \cite{agn_neff2016injectivity}. Here, a function $W\col\GLpn\to\R$ is called strictly rank-one convex if
\[
	W(F+t(\xi\otimes\eta)) < (1-t)\.W(F) + t\.W(F+\xi\otimes\eta)
	\qquad\text{for all }\;
	t\in[0,1]
\]
for all $F\in\GLpn$ and all $\xi,\eta\in\R^n$ such that $F+\xi\otimes\eta\in\GLpn$.

This observation raises the question whether our result is restricted to the laminate case or whether the Cauchy stress induced by a strictly rank-one convex function is non-homogeneous for \emph{any} non-affine deformation.\footnote{Note that the brief summary of our result from \cite{agn_mihai2016hyperelastic} in the abstract of \cite{agn_neff2016injectivity} could easily be misread to claim the latter.} In the following, we will negatively answer this further-reaching conjecture by giving an explicit example (both for the two-dimensional and the three-dimensional case) of a strictly rank-one convex energy and a non-affine deformation such that the corresponding Cauchy stress tensor is constant. In particular, this result also implies that strict rank-one convexity is, in general, not connected to the invertibility of the Cauchy stress-stretch relation.

The energy functions in our examples are not highly pathological in nature; in fact, they are not only objective and isotropic, but also satisfy common constitutive conditions such as the correct growth behaviour for singular deformation gradients, coercivity and the uniqueness of a stress-free reference configuration, cf.\ Remark \ref{remark:constitutivePropertiesOfExample}. Furthermore, the non-affine deformations we consider are \emph{conformal}, i.e.\ locally angle preserving, cf.\ Section \ref{sec:conformalmappings}.

Our approach is structured as follows. After a brief introduction of conformal mappings, we consider strictly rank-one convex examples of so-called \emph{conformally invariant} energ functions and discuss their connection to conformal mappings, both in the finite and the linearized case.
By additively coupling these purely isochoric energies with an appropriate volumetric expression, we then construct energy functions which are physically viable, but still induce a constant Cauchy stress tensor field for certain conformal deformation mappings, which proves our main result as given in Proposition \ref{prop:final}. We will also point out that our result does not contradict the statement made in \cite{agn_neff2016injectivity} by showing that the (conformal) mappings we use in our examples do not satisfy the essential Hadamard jump condition.

\section{Conformal mappings}\label{sec:conformalmappings}
As indicated in Section \ref{section:introduction}, our examples of non-affine deformations corresponding to constant Cauchy stress are \emph{conformal mappings}, i.e.\ of the form
\[
	\varphi\col\Omega\to\R^n
	\qquad\text{with}\quad
	\grad\varphi(x)\in\CSOn
	\quad\text{for all }\;x\in\Omega
	\,,
\]
where $\Omega\subset\R^n$ is the elastic body in its reference configuration and
\[
	\CSOn \colonequals \Rp\cdot\SOn = \{ \lambda\cdot R \,\setvert\, \lambda>0\,,\; R\in\SOn \}
\]
denotes the \emph{special conformal group}; here, $\Rp=(0,\infty)$ is the set of positive real numbers and $\SOn$ denotes the special orthogonal group. In particular, a mapping $\varphi$ is conformal if and only if for each $x\in\Omega$, there exist $\lambda(x)\in\Rp$ and $R(x)\in\SOn$ such that
\[
	\grad\varphi(x)=\lambda(x)\cdot R(x)
	\qquad\text{or, equivalently,}\qquad
	\frac{\grad\varphi(x)^T\grad\varphi(x)}{\det(\grad\varphi(x)^T\grad\varphi(x))^{\afrac1n}} = \id
	\,.
\]
In the two-dimensional case, the (planar) conformal mappings on $\Omega\subset\R^2$ can be identified with the holomorphic functions $g\colon\Omega\subset\C\to\C$ with $g'(z)\neq0$ for all $z\in\Omega$.

We consider an example of a non-affine conformal mapping on some open set $\Omega\subset\R^2$, as shown in Fig.\ \ref{fig:mobius}.
For $\Omega\subset\R^2\setminus\{0\}$, let
\[
	\varphi\colon\Omega\to\R^2\,,\quad \varphi(x) = \varphi\matr{x_1\\x_2} = \frac{1}{\norm{x}^2}\cdot\matr{x_1\\-x_2}\,.
\]
Then
\begin{align}\label{eq:detconformal}
	\grad\varphi(x)=\left(\begin{array}{cc}
									\frac{\norm{x}^2-2x_1^2}{\norm{x}^4} & \frac{-2x_1x_2}{\norm{x}^4}\\
									\frac{2x_1x_2}{\norm{x}^4} & \frac{-\norm{x}^2+2x_2^2}{\norm{x}^4}	
									\end{array}\right)=
									\frac{1}{\norm{x}^2}\left(\begin{array}{cc}
															1-\frac{2x_1^2}{\norm{x}^2} & -\frac{2x_1x_2}{\norm{x}^2}\\
															\frac{2x_1x_2}{\norm{x}^2} & -1+\frac{2x_2^2}{\norm{x}^2}
															\end{array}\right)\,,
\end{align}
thus
\begin{align*}
	\left(\grad\varphi(x)\right)^T\grad\varphi(x)&= \frac{1}{\norm{x}^4}\cdot\left(\begin{array}{cc}
																					1-\frac{4x_1^2}{\norm{x}^2}+\frac{4x_1^4}{\norm{x}^4}+\frac{4x_1^2x_2^2}{\norm{x}^4} & \frac{2x_1x_2}{\norm{x}^2}-\frac{4x_1^3x_2}{\norm{x}^4}+\frac{2x_1x_2}{\norm{x}^2}-\frac{4x_1x_2^3}{\norm{x}^4}\\
																					\frac{2x_1x_2}{\norm{x}^2}-\frac{4x_1^3x_2}{\norm{x}^4}+\frac{2x_1x_2}{\norm{x}^2}-\frac{4x_1x_2^3}{\norm{x}^4} & \frac{4x_2^4}{\norm{x}^4}-\frac{4x_2^2}{\norm{x}^2}+1+\frac{4x_1^2x_2^2}{\norm{x}^4}	
																					\end{array}\right)
	=\frac{1}{\norm{x}^4}\cdot\id
\end{align*}
and $\det(\grad\varphi(x))=\frac{1}{\norm{x}^4}>0$.
Consequently, $\varphi$ is a non-affine conformal mapping with a decomposition $\grad\varphi(x)=\lambda(x)\cdot R(x)$ such that $\lambda(x)>0$ and $R(x)\in\SO(2)$ for all $x\in\Omega$; specifically,
\[
	\lambda(x) = \det(\grad\varphi(x))^{\afrac12} = \frac{1}{\norm{x}^2}
	\qquad\text{and}\qquad
	R(x) = \frac{\grad\varphi(x)}{\det(\grad\varphi(x))^{\afrac12}}
	= \matr{
		1-\frac{2x_1^2}{\norm{x}^2} & -\frac{2x_1x_2}{\norm{x}^2}\\
		\frac{2x_1x_2}{\norm{x}^2} & -1+\frac{2x_2^2}{\norm{x}^2}
	}
	\in\SO(2)\,.
\]
\begin{figure}[H]
	\centering
	\begin{tikzpicture}[scale=1.47,even odd rule]
	\OmegaSetDefaults
	\OmegaSetGridSize{.0147}{.0147}
	\OmegaSetOutlineSampleCount{42}
	\OmegaSetOutlineStyle{thick, color=black}
	\OmegaSetShadingStyle{color = lightgray}
	\OmegaSetGridStyle{help lines}
	\def\circleCenterX{0.5}
	\def\circleCenterY{0}
	\def\circleRadius{0.21}
	\def\stepSize{45}
	\OmegaSetOutlineToCircle{\circleCenterX}{\circleCenterY}{\circleRadius}
	\OmegaSetSmoothOutline
		\begin{scope}[scale=2.002]
			\OmegaSetGridSampleCount{7}
			\OmegaSetDeformationToId
			\OmegaDraw
			\OmegaSetGridSampleCount{21}
			\foreach \n in {0,\stepSize,...,360}{
				\draw[fill=red]({\circleCenterX+\circleRadius*cos(\n)},{\circleCenterY+\circleRadius*sin(\n)})circle(0.49pt);
			}
			\draw[fill=blue](\circleCenterX,\circleCenterY)circle(0.49pt);
			\draw[->] (0,-0.7) -- (0,0.7);
			\draw[->] (-0.21,0) -- (1.26,0);
			\draw (0.5,0.021) -- (0.5,-0.021) node[below] {\footnotesize $0.5$};
			\draw (0.021,0.5) -- (-0.021,0.5) node[left] {\footnotesize $0.5$};
			\draw (0.021,-0.5) -- (-0.021,-0.5) node[left] {\footnotesize $-0.5$};
			\draw (1,0.021) -- (1,-0.021) node[below] {\footnotesize $1$};
			\node[below] at (-0.15,-0.021){\footnotesize $0$};
		\end{scope}
		\begin{scope}[xshift=5.39cm,yshift=0cm]
		\def\moeConst{0}
		\def\moeScale{1}
			\pgfmathdeclarefunction*{moebiusx}{2}{%
				\pgfmathparse{\moeScale*(#1+\moeConst)/((#1+\moeConst)^2+(#2+\moeConst)^2)}%
			}
			\pgfmathdeclarefunction*{moebiusy}{2}{%
				\pgfmathparse{-\moeScale*(#2+\moeConst)/((#1+\moeConst)^2+(#2+\moeConst)^2)}%
			}
			\OmegaSetDeformation{moebiusx}{moebiusy}
			\OmegaDraw
			\foreach \n in {0,\stepSize,...,360}{
				\draw[fill=red]({moebiusx(\circleCenterX+\circleRadius*cos(\n),\circleCenterY+\circleRadius*sin(\n))},{moebiusy(\circleCenterX+\circleRadius*cos(\n),\circleCenterY+\circleRadius*sin(\n))})circle(1.4pt);
			}
			\draw[fill=blue]({moebiusx(\circleCenterX,\circleCenterY)},{moebiusy(\circleCenterX,\circleCenterY)})circle(1.4pt);
			\draw[->] (0,-1.47) -- (0,1.47);
			\draw[->] (-0.21,0) -- (3.15,0);
			\draw (1,0.05) -- (1,-0.05) node[below] {\footnotesize $1$};
			\draw (0.05,1) -- (-0.05,1) node[left] {\footnotesize $1$};
			\draw (0.05,-1) -- (-0.05,-1) node[left] {\footnotesize $-1$};
			\draw (2,0.05) -- (2,-0.05) node[below] {\footnotesize $2$};
			\node[below] at (-0.15,-0.05){\footnotesize $0$};
		\end{scope}
		
		\draw[->] (2.1,0.343) to[out=34.3,in=147] (4.9,0.343);
		\draw (3.5,.91) node[above]{$\varphi$};
	\end{tikzpicture}
	\caption{Visualization of the conformal mapping $\varphi\colon\Omega\subset\R^2\setminus\{0\}\to\R^2$ with $\varphi(x)=\frac{1}{\norm{x}^2}\cdot\binom{x_1}{-x_2}$, 
	showing that infinitesimal squares are rotated and scaled.}
	\label{fig:mobius}
\end{figure}
Note that for arbitrary dimension $n\geq2$ and $\widetilde{\Omega}\subset\R^n\setminus\{0\}$, the mapping $\widetilde{\varphi}\colon\widetilde{\Omega}\to\R^n$ with $\widetilde{\varphi}(x)=\frac{1}{\norm{x}^2}\left(x_1,-x_2,x_3,x_4, \dotsc,x_n\right)^T$
is conformal as well.
%%%%%%%%%%%%%%%%%%%%%%%%%%%%%%%%%%%%%%%%%%%%%%%%%%%%%%%%%%%%%%%%%%%%%%%%%%%%%%%%%%%%%%%%%%%%%%%%%%%%%%%%%%%%
\subsection{Möbius transformations}
The above example is a special case of a so-called \emph{Möbius transformation}, i.e.\ a mapping $\varphi\col\Omega\subset\R^n\to\R^n$ given as the composition of finitely many reflections at hyperplanes and/or spheres.%
\footnote{%
	The reflection $s\col\R^n\setminus\{x_0\}\to\R^n$ at a sphere $S_{x_0}(r)=\{x\in\R^n\setvert\norm{x-x_0}^2=r^2\}$ with center $x_0\in\R^n$ and radius $r>0$ is defined by $s(x)=x_0+\frac{r^2}{\norm{x-x_0}^2}\cdot(x-x_0)$. Note that a Möbius transformation is orientation preserving if and only if the number of composed reflections is even.
}
\begin{remark}\label{rem:moebiusconformal}
	For $n\geq3$, every non-trivial conformal mapping is a Möbius transformation \cite{blair2000inversion}, while in the planar case $n=2$, the orientation-preserving Möbius transformations correspond to the complex functions of the form
	\begin{align}
		\ftilde\colon\C\to\C\,,\quad
		\ftilde(z)\colonequals\frac{az+b}{cz+d}\qquad\text{with }\;\; a,b,c,d\in\C\,,\; ad-bc\neq0 \,.
	\end{align}
\end{remark}
%%%%%%%%%%%%%%%%%%%%%%%%%%%%%%%%%%%%%%%%%%%%%%%%%%%%%%%%%%%%%%%%%%%%%%%%%%%%%%%%%%%%%%%%%%%%%%%%%%%%%%%%%%%%%%%%
\section{Conformally invariant energy functions and strict rank-one convexity}\label{sec:convexenergiesconformalmappings}
A function $W\col\GLpn\to\R$ is called \emph{conformally invariant} if it is \emph{objective}, \emph{isotropic} and \emph{isochoric}, i.e.\ if
\[
	W(a\,Q_1FQ_2) = W(F)
	\qquad\text{for all }\;\;
	F\in\GLpn\,,\;
	a>0\,,\;
	Q_1,Q_2\in\SOn
	\,.
\]
In nonlinear elasticity, conformally invariant energy functions are generally not directly suited for modeling the elastic behaviour of a material due to their invariance under purely volumetric scaling. However, they are commonly coupled with a volumetric energy term of the form $F\mapsto f(\det(F))$ for some function $f\col(0,\infty)\to\R$. Energy functions of this type, also known as an additive \emph{volumetric-isochoric split} \cite{richter1948,agn_graban2019richter}, will be used in Section \ref{sec:results} in order to establish our main results.

\subsection{Conformally invariant energies in the planar case}

It is well known \cite{agn_martin2015rank} that in the planar case $n=2$, an energy function $W\col\GLpn\to\R$ is conformally invariant if and only if it can be expressed in the form
\begin{equation}\label{eq:planarConformallyInvariantRepresentations}
	W(F) = \psi(\K(F)) = \hhat(K(F))
	\qquad\text{with }\quad
	\psi,\hhat\col[1,\infty)\to\R
\end{equation}
for all $F\in\GLp(2)$, where
\[
	\K\col\GLp(2)\to[1,\infty)\,,
	\quad
	\K(F) = \frac12\.\frac{\norm{F}^2}{\det F}
	\qquad\text{and}\qquad
	K\col\GLp(2)\to[1,\infty)\,,
	\quad
	K(F) = \frac{\opnorm{F}^2}{\det F}
\]
denote the \emph{distortion}\footnote{Note that $\K(F)\geq1$ for all $F\in\GLp(2)$ due to the Hadamard inequality, with $\K(F)=1$ if and only if $F$ is conformal.} and the \emph{linear distortion} function, respectively; here, $\norm{X}=(\sum_{i,j=1}^n X_{ij}^2)^{\afrac12}$ is the Frobenius matrix norm and $\opnorm{X}$ is the operator norm, i.e.\ the largest singular value, of $X\in\Rnn$.

In the following, let $W\col\GLp(2)\to\R$ be of the form \eqref{eq:planarConformallyInvariantRepresentations} such that the representation $\psi$ of $W$ in terms of the classical distortion $\K$ is sufficiently smooth with $\psi^\prime(1)>0$ as well as strictly monotone increasing and convex on $[1,\infty)$. Since the mapping $\K$ itself is polyconvex \cite{agn_martin2015rank}, the function $W$ is polyconvex and thus rank-one convex in this case as well, and thus the \emph{Legendre-Hadamard conditions} \cite{agn_neff2000diss}
\begin{align}
\label{eq:LHinequalitiesDefinition}
	D_F^2W(F).[\xi\otimes\eta,\xi\otimes\eta]\geq c^+\norm{\xi}^2\norm{\eta}^2
\end{align} 
are satisfied for all $\xi,\eta\in\R^2$ and some $c^+\geq0$. We calculate the derivatives of $W$ in direction of $H\in\R^{2\times2}$ explicitly to find
\begin{align}
\label{eq:planarEnergyFirstDerivative}
	D_FW(F).[H]=\psi^{\prime}(\K(F))\cdot D_F(\K(F)).[H]=\frac{1}{2}\,\psi^{\prime}(\K(F))\cdot\left[\frac{\iprod{2F-\norm{F}^2F^{-T},\,H}}{\det(F)}\right],
\end{align}
thus
\begin{align}
	&\quad D^2W(F).[H,H]\notag\\
		&=\frac{1}{4}\,\psi^{\prime\prime}(\K(F))\cdot\frac{\left(2\iprod{ F,H}-\norm{F}^2\iprod{ F^{-T},H}\right)^2}{\det(F)^2}\notag\\
		&\quad+\frac{1}{2}\,\psi^{\prime}(\K(F))\cdot\det(F)^{-1}\cdot[-4\iprod{ F^{-T},H}\iprod{ F,H}+2\iprod{ H,H}+\norm{F}^2\iprod{ F^{-T},H}^2
		+\norm{F}^2\iprod{ F^{-T}H^TF^{-T},H}]\label{eq:secondderivationequality}\\
		&\geq \frac{1}{2}\,\psi^{\prime}(\K(F))\cdot\det(F)^{-1}\cdot[-4\iprod{\iprod{F^{-T},H}\. F,H} +2\norm{H}^2+\norm{F}^2\iprod{ F^{-T},H}^2 +\norm{F}^2\iprod{ F^{-T}H^TF^{-T},H}]\notag\\
		&\geq \frac{1}{2}\,\psi^{\prime}(\K(F))\cdot\det(F)^{-1}\cdot\Bigl[-2\.(\.\iprod{ F^{-T},H}^2\.\norm{F}^2\.+\.\norm{H}^2) \label{eq:secondderivationInequality}\\
		&\hspace*{20.58em} +2\norm{H}^2+\norm{F}^2\iprod{ F^{-T},H}^2+\norm{F}^2\iprod{ F^{-T}H^TF^{-T},H}\Bigr]\notag\\
		&=\frac{1}{2}\,\psi^{\prime}(\K(F))\cdot\det(F)^{-1}\norm{F}^2\cdot[-\iprod{ F^{-T},H}^2+\iprod{ F^{-T}H^TF^{-T},H}]\,,\label{eq:secondderivation}
\end{align}
where the inequality in \eqref{eq:secondderivationInequality} follows from applying the Cauchy-Schwartz inequality and Young's inequality, with strict inequality holding if and only if $\iprod{F^{-T},H}^2F\neq H$.

Now, let $H=\xi\otimes\eta$ for $\xi,\eta\in\R^2$. Then \eqref{eq:secondderivation} vanishes due to the mapping $F\mapsto\det F$ being rank-one affine.%
\footnote{%
	Note that $\iprod{D_F\left(\det F\right),H}= \iprod{\Cof F,H}=\det F\cdot\iprod{ F^{-T},H}$ and
	\[
		D_F^2\left(\det F\right).[H,H]=\iprod{\Cof F,H}\iprod{F^{-T},H}+\det F\cdot\iprod{-F^{-T}H^TF^{-T},H}=\det F\cdot[\iprod{F^{-T},H}^2-\iprod{ F^{-T}H^TF^{-T},H}]\,.
	\]
}
Furthermore, since $\rank(F)=2$ and $\rank(H)=1$, we find $\iprod{F^{-T},H}^2F\neq H$ and thus
\begin{align}
	D_F^2W(F).\left[\xi\otimes\eta,\xi\otimes\eta\right]>0\,.
\end{align}
Moreover, the Cauchy stress induced by the elastic energy potential $W$ is given by 
\begin{align*}
	\sigma(F)=D_FW(F)\cdot\Cof(F)^{-1}&=\psi^{\prime}(\K(F))\cdot\left[\frac{FF^T}{\det(F)^2}-\frac{1}{2}\frac{\norm{F}^2\cdot\id}{\det(F)^2}\right]
	=\psi^{\prime}(\K(F))\cdot\left[\frac{FF^T}{\det(F)^2}-\frac{\K(F)}{\det(F)}\cdot\id\right]\,;
\end{align*}
note that $\sigma(F)_{F=\id}=0$. 
Since $\psi$ is assumed to be strictly monotone increasing on $[1,\infty)$, and since $\K(F)=1$ if and only if $F$ is conformal, $W$ attains its global minimum exactly on the set $\CSO(2)$ of conformal matrices. In particular, if $\K(F)=1$, then $FF^T=\det(F)\cdot\id$ and thus 
\begin{align}\label{eq:cauchy-isochoric}
\sigma(F)=\psi^{\prime}(1)\cdot\left[\frac{FF^T}{\det(F)^2}-\frac{1}{\det(F)}\cdot\id\right]=0.
\end{align}

\subsubsection{Approximations in linearized elasticity}

We now take a closer look at the linearization of the energy potential $W$. Applying \eqref{eq:secondderivationequality} to $F=\id$ and $H=\grad u\in\R^{2\times2}$, we find
\begin{align}
	D^2W(\id).[\grad u,\grad u]&=\frac{1}{2}\,\psi^{\prime}(1)\left[-2\iprod{\id,\grad u}^2+2\iprod{\grad u,\grad u}+2\iprod{\grad u^T,\grad u}\right]
	=2\,\psi^{\prime}(1)\,\norm{\dev\sym\grad u}^2\,,
\end{align}
with $\dev_nX=X-\frac{1}{n}\tr(X)\cdot\id_n$, hence the quadratic potential for linearized elasticity corresponding to $W$ is given by
\begin{align}
	W_{\text{lin}}(\grad u)=\mu\,\norm{\dev_2\sym\grad u}^2
\end{align}
with $\mu=\psi^{\prime}(1)$. Moreover,
\begin{align}
	DW_{\text{lin}}(\grad u).[\widetilde{H}]=2\,\mu\,\iprod{\dev\sym\grad u,\widetilde{H}}\,,\qquad	D^2W_{\text{lin}}(\grad u).[\widetilde{H},\widetilde{H}]=2\,\mu\,\norm{\dev_2\sym\widetilde{H}}^2
\end{align} 
for $\widetilde{H}\in\R^{2\times2}$ and thus, for $\xi,\eta\in\R^2$,
\begin{align}
	D^2W_{\text{lin}}(\grad u).[\xi\otimes\eta,\xi\otimes\eta]=2\mu\,\norm{\dev\sym(\xi\otimes\eta)}^2=\mu\,\norm{\xi}^2\norm{\eta}^2\,.
\end{align}
Therefore, $W_{\text{lin}}$ is also strictly elliptic, i.e.\ satisfies \eqref{eq:LHinequalitiesDefinition} with a positive constant $c^+=\mu$. Furthermore, the kernel of the linearized energy is given by 
\begin{align}\label{eq:nullspace}
	W_{\text{lin}}(\grad u)=0&\iff \dev\sym\grad u=0\notag\\
	&\iff u(x)=\frac{1}{2}\left[2\iprod{\matr{-\gamma\\\beta},x}\,x-\matr{-\gamma\\\beta} \norm{x}^2\right]
	+(\widehat{p}\,\id+\widehat{A})\,x+\widehat{b}
\end{align}
with $\widehat{A}\in\so(2)$, $\widehat{b}\in\R^2$ and $\beta$, $\gamma$, $\widehat{p}\in\R$, where $\so(2)$ denotes the set of skew symmetric $2\!\times\!2$--matrices \cite{agn_neff2009subgrid}. This in turn is equivalent to the statement $\sigma_{\text{lin}}(F)=2\mu\,\dev\sym\grad u=0$.

If we consider a quadratic approximation of the conformal mapping $\varphi\colon\Omega\to\R^2$ with $\varphi(x)=\frac{1}{\norm{x}^2}\cdot(x_1,-x_2)^T$, which was discussed in Section \ref{sec:conformalmappings}, we obtain 
\begin{align}\label{linearization}
u(x)=\frac{1}{2}\left[2\.\iprod{
		\matr{
			16\\[.14em]
			0
		}
		,x}\,x
		-\matr{
			16\\[.14em]
			0
		}
		\norm{x}^2\right]-13\cdot\id
		\cdot x
		+\matr{
			6\\[.14em]
			0
		}
		\,.
\end{align}
This expression corresponds to the representation in \eqref{eq:nullspace}, thus $W_{\text{lin}}(\grad u)=0$.
\begin{figure}[H]
	\centering
	\begin{tikzpicture}
	\OmegaSetDefaults
	\OmegaSetGridSize{.0147}{.0147}
	\OmegaSetOutlineSampleCount{42}
	\OmegaSetOutlineStyle{thick, color=black}
	\OmegaSetShadingStyle{color = lightgray}
	\OmegaSetGridStyle{help lines}
	\def\circleCenterX{0.5}
	\def\circleCenterY{0}
	\def\circleRadius{0.147}
	\def\stepSize{45}
	\OmegaSetOutlineToCircle{\circleCenterX}{\circleCenterY}{\circleRadius}
	\OmegaSetSmoothOutline
		\begin{scope}[scale=4.41]
			\OmegaSetGridSampleCount{7}
			\OmegaSetDeformationToId
			\OmegaDraw
			\OmegaSetGridSampleCount{21}
			\foreach \n in {0,\stepSize,...,360}{
				\draw[fill=red]({\circleCenterX+\circleRadius*cos(\n)},{\circleCenterY+\circleRadius*sin(\n)})circle(0.343pt);
			}
			\draw[fill=blue](\circleCenterX,\circleCenterY)circle(0.343pt);
			\draw[->] (0,-0.343) -- (0,0.343);
			\draw[->] (-0.049,0) -- (0.735,0);
			\draw (0.2506,0.014) -- (0.2506,-0.014) node[below] {\footnotesize $0.25$};
			\draw (0.5005,0.014) -- (0.5005,-0.014) node[below] {\footnotesize $0.5$};
			\draw (0.014,0.2002) -- (-0.014,0.2002) node[left] {\footnotesize $0.2$};
			\draw (0.014,-0.2002) -- (-0.014,-0.2002) node[left] {\footnotesize $-0.2$};
			\node[below] at (-0.07,-0.014){\footnotesize $0$};
		\end{scope}
		\begin{scope}[xshift=7.35cm,yshift=0cm,scale=1.75]
			\pgfmathdeclarefunction*{moebiusx}{2}{%
				\pgfmathparse{-8*(#2)^2+8*(#1)^2-12*#1+6}%
			}
			\pgfmathdeclarefunction*{moebiusy}{2}{%
				\pgfmathparse{16*#1*#2-12*#2}%
			}
			\OmegaSetDeformation{moebiusx}{moebiusy}
			\OmegaDraw
			\foreach \n in {0,\stepSize,...,360}{
				\draw[fill=red]({moebiusx(\circleCenterX+\circleRadius*cos(\n),\circleCenterY+\circleRadius*sin(\n))},{moebiusy(\circleCenterX+\circleRadius*cos(\n),\circleCenterY+\circleRadius*sin(\n))})circle(1.05pt);
			}
			\draw[fill=blue]({moebiusx(\circleCenterX,\circleCenterY)},{moebiusy(\circleCenterX,\circleCenterY)})circle(1.05pt);
			\draw[->] (0,-0.98) -- (0,0.98);
			\draw[->] (-0.21,0) -- (3.15,0);
			\draw (1,0.028) -- (1,-0.028) node[below] {\footnotesize $1$};
			\draw (0.028,0.5005) -- (-0.028,0.5005) node[left] {\footnotesize $0.5$};
			\draw (0.028,-0.5005) -- (-0.028,-0.5005) node[left] {\footnotesize $-0.5$};
			\draw (2,0.028) -- (2,-0.028) node[below] {\footnotesize $2$};
			\node[below] at (-0.15,-0.028){\footnotesize $0$};
		\end{scope}
		
		\draw[->] (3.43,0.343) to[out=24.5,in=156.8] (6.37,0.343);
		\draw (4.9,.735) node[above]{$u+\operatorname{id}$};
	\end{tikzpicture}
	\caption{The \enquote{infinitesimal conformal displacement} $u$ given as the the quadratic approximation of the conformal mapping $\varphi\colon\Omega\subset\R^2\setminus\{0\}\to\R^2$ with $\varphi(x)=\frac{1}{\norm{x}^2}\cdot(x_1,-x_2)^T$. Infinitesimal squares are approximately rotated and scaled.}
	\label{fig:infmobius}
\end{figure}
These fundamental connections between conformal mappings, conformally invariant, strictly elliptic energy potentials on $\GLp(2)$ and their linearizations are shown in Fig.\ \ref{fig:commutativediagram}.
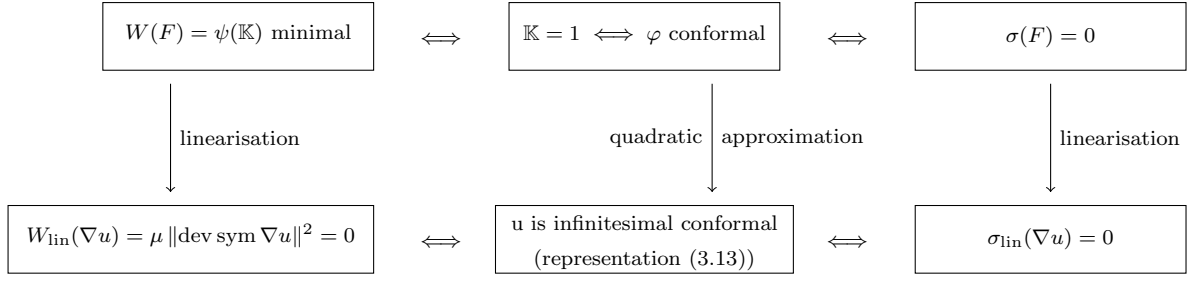
\begin{figure}[H]
	\centering
	\begin{tikzpicture}[scale=1.8]
		\draw (0,0) -- (2,0) -- (2,0.5) -- (0,0.5) -- cycle;
		\draw (3,0) -- (5,0) -- (5,0.5) -- (3,0.5) -- cycle;
		%\draw (2.7,-1) -- (7.3,-1) -- (7.3,-1.75) -- (2.7,-1.75) -- cycle;
		\draw (2.9,-1) -- (5.1,-1) -- (5.1,-1.5) -- (2.9,-1.5) -- cycle;
		\draw (6,0) -- (8,0) -- (8,0.5) -- (6,0.5) -- cycle;
		\draw (6,-1) -- (8,-1) -- (8,-1.5) -- (6,-1.5) -- cycle;
		\draw (-0.7,-1) -- (2,-1) -- (2,-1.5) -- (-0.7,-1.5) -- cycle;
		\draw (2.5,0.125) node[above]{\footnotesize $\iff$};
		\draw (2.5,-1.375) node[above]{\footnotesize $\iff$};
		\draw (5.5,-1.375) node[above]{\footnotesize $\iff$};
		\draw (5.5,0.125) node[above]{\footnotesize $\iff$};
		\draw (7,0.1) node[above]{\footnotesize $\sigma(F)=0$};
		\draw (7,-1.39) node[above]{\footnotesize $\sigma_{\text{lin}}(\grad u)=0$};
		\draw[->] (7,-0.1) -- (7,-0.9);
		\draw (7,-0.5) node[right]{\footnotesize linearisation};
		\draw (1,0.125) node[above]{\footnotesize $W(F)=\psi(\K)$ minimal};
		\draw (4,0.125) node[above]{\footnotesize $\K=1\iff\varphi$ conformal};
		\draw (4,-1.25) node[above]{\footnotesize u is infinitesimal conformal};
		\draw (4,-1.25) node[below]{\footnotesize (representation \eqref{eq:nullspace})};
		\draw[->](0.5,-0.1) -- (0.5,-0.9);
		\draw[->](4.5,-0.1) -- (4.5,-0.9);
		\draw (4.5,-0.5) node[right]{\footnotesize approximation};
		\draw (4.5,-0.5) node[left]{\footnotesize quadratic};
		\draw (0.5,-0.5) node[right]{\footnotesize linearisation};
		\draw (0.65,-1.375) node[above]{\footnotesize $W_{\text{lin}}(\grad u)=\mu\,\norm{\dev\sym\grad u}^2=0$};
	\end{tikzpicture}
	\caption{Connections between conformally invariant, strictly elliptic energies on $\GLp(2)$, their linearisations and conformal mappings.}
	\label{fig:commutativediagram}
\end{figure}

\subsubsection{A general criterion for strict rank-one convexity}
In addition to the sufficient criteria for strict ellipticity of a conformally invariant energy $W\col\GLp(2)\to\R$ given above, i.e.\ the conditions that $\psi^\prime(1)>0$ and $\psi''\geq0$ on $[1,\infty)$ for the representation $\psi$ of $W$ in terms of the distortion $\K$, we can also classify all strictly rank-one convex conformally invariant functions via the representation in terms of the linear distortion $K$, cf.\ Remark \ref{remark:result2DotherEnergies}.

\begin{theorem}
\label{theorem:ellipticityIsochoric}
	Let $W\col\GLp(2)\to\R$ be conformally invariant, i.e.\ $W(\lambda\. Q_1FQ_2)=W(F)$ for all $\lambda\in\R_+$ and all $Q\in\SO(2)$, and let $h\col\Rp\to\R$ and $g\col\Rp\times\Rp\to\R$ denote the uniquely determined functions 
	with
	\[
		W(F)=g(\lambda_1,\lambda_2)=h\left(\frac{\lambda_1}{\lambda_2}\right)=h\left(\frac{\lambda_2}{\lambda_1}\right)
	\]
	for all $F\in\GLp(2)$ with singular values $\lambda_1,\lambda_2$. Then the following are equivalent:
	\begin{itemize}
		\item[i)] $W$ is polyconvex, \qquad\qquad\qquad\qquad\qquad\qquad iv) $h$ is convex on $\Rp$,
		\item[ii)] $W$ is rank-one convex, \qquad\qquad\qquad\qquad\qquad v) $h$ is convex and non-decreasing on $[1,\infty)$.
		\item[iii)] $g$ is separately convex,
	\end{itemize}
\end{theorem}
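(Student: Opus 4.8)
The plan is to establish all five statements as equivalent by closing the cycle i) $\Rightarrow$ ii) $\Rightarrow$ iii) $\Rightarrow$ iv) $\Rightarrow$ v) $\Rightarrow$ i), with iv) $\Leftrightarrow$ v) as a purely one-dimensional lemma and v) $\Rightarrow$ i) as the analytic heart. The first three steps are soft. The implication i) $\Rightarrow$ ii) is the standard fact that polyconvexity implies rank-one convexity. For ii) $\Rightarrow$ iii) I would observe that for fixed $\lambda_2>0$ the curve $\lambda_1\mapsto\operatorname{diag}(\lambda_1,\lambda_2)$ moves along the rank-one direction $e_1\otimes e_1$, stays in $\GLp(2)$, and has singular values exactly $\lambda_1,\lambda_2$; hence rank-one convexity forces $\lambda_1\mapsto g(\lambda_1,\lambda_2)$ to be convex, and symmetrically in $\lambda_2$, so $g$ is separately convex. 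For iii) $\Rightarrow$ iv) I would simply freeze $\lambda_2=1$: separate convexity in the first slot gives convexity of $\lambda_1\mapsto g(\lambda_1,1)=h(\lambda_1)$ on all of $\Rp$, which is exactly iv).

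\textbf{The one-dimensional core iv) $\Leftrightarrow$ v).} The only input here is the symmetry $h(t)=h(1/t)$, inherited from the invariance of $W$ under swapping the two singular values. For iv) $\Rightarrow$ v) I would differentiate $h(t)=h(1/t)$ at $t=1$ (using the one-sided derivatives, which exist for convex $h$) to obtain $h'(1)=0$; since convexity makes $h'$ non-decreasing, this yields $h'\geq 0$ on $[1,\infty)$, i.e. $h$ is non-decreasing there, while convexity on $[1,\infty)$ is inherited from convexity on $\Rp$. For v) $\Rightarrow$ iv) I would write, for $t\in(0,1)$, the value $h(t)=h(1/t)$ as the composition of the convex non-decreasing function $h|_{[1,\infty)}$ with the convex map $t\mapsto 1/t$, which is convex on $(0,1)$; convexity across $t=1$ then follows because the symmetry forces the left derivative $-h'(1^+)\leq 0$ to lie below the right derivative $h'(1^+)\geq 0$, so no downward jump occurs.

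\textbf{The heart v) $\Rightarrow$ i).} Here lies the real work, and I would exploit the closed-form representation $W(F)=\hhat(K(F))$ with $K(F)=\opnorm{F}^2/\det F$, where $\hhat=h|_{[1,\infty)}$ is convex and non-decreasing. First I would extend $\hhat$ to a convex non-decreasing function on $[0,\infty)$ (for instance constant below $1$), which does not alter $W$ since $K\geq 1$. The decisive point is then to recognize $F\mapsto K(F)$ as \emph{polyconvex}: writing $K(F)=P(F,\det F)$ with $P(X,\delta)=\opnorm{X}^2/\delta$, I would use that the scalar quadratic-over-linear map $(y,\delta)\mapsto y^2/\delta$ is jointly convex on $\R\times\Rp$ and non-decreasing in $y$ for $y\geq 0$, and that $X\mapsto\opnorm{X}$ is a convex norm; the standard convex-composition rule then renders $P$ jointly convex in $(X,\delta)$. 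Composing the convex non-decreasing $\hhat$ with this jointly convex $P$ produces a convex function of $(F,\det F)$ whose value on the graph of the determinant equals $W(F)$, so $W$ is polyconvex.

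\textbf{Expected main obstacle.} The soft implications and the equivalence iv) $\Leftrightarrow$ v) are routine; the genuine difficulty is v) $\Rightarrow$ i), and within it the polyconvexity of the linear distortion $K$. The crux is getting the determinant into the right place: it is precisely the representation $K=\opnorm{F}^2/\det F$ together with the joint convexity of the quadratic-over-linear function that converts this ratio into a polyconvex object, and the monotonicity hypotheses on $\hhat$ are exactly what the convex-composition rule consumes. I would therefore be careful that the extension of $\hhat$ below $1$ preserves both convexity and monotonicity, so that the composition argument is valid on the full domain $\R^{2\times2}\times\Rp$ and not merely on the determinant graph.
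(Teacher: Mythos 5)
Your proposal is correct, and it is worth noting that the paper itself does not prove Theorem \ref{theorem:ellipticityIsochoric} at all: its ``proof'' is a citation to \cite[Theorem~3.3]{agn_martin2015rank}. Your argument essentially reconstructs the proof given in that reference: the soft implications i) $\Rightarrow$ ii) $\Rightarrow$ iii) $\Rightarrow$ iv) via restriction to diagonal rank-one lines and $\lambda_2=1$, the one-dimensional symmetry lemma for iv) $\Leftrightarrow$ v), and---the decisive step there as well---the polyconvexity of the linear distortion $K(F)=\frac{\opnorm{F}^2}{\det F}$, obtained exactly as you propose, by composing the convex operator norm with the jointly convex, $y$-monotone quadratic-over-linear function $(y,\delta)\mapsto \frac{y^2}{\delta}$ on $\R\times\Rp$, and then composing with the convex non-decreasing representation $\hhat$ (your constant extension of $\hhat$ below $1$ indeed preserves convexity and monotonicity, since the left slope $0$ at the junction lies below $\hhat'_+(1)\geq 0$).

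One small repair is needed in your step iv) $\Rightarrow$ v): the claim $h'(1)=0$ is false for merely convex $h$; e.g.\ $h(t)=\max\{t,1/t\}$ is convex with $h(t)=h(1/t)$ but has $h'_-(1)=-1$ and $h'_+(1)=1$. Differencing the symmetry $h(t)=h(1/t)$ at $t=1$ yields only the one-sided relation $h'_+(1)=-h'_-(1)$, which combined with the convexity inequality $h'_-(1)\leq h'_+(1)$ gives $h'_+(1)\geq 0$; since one-sided derivatives of a convex function are non-decreasing, this still delivers the required monotonicity of $h$ on $[1,\infty)$. This is precisely the one-sided statement you use correctly in the converse direction v) $\Rightarrow$ iv), so the fix is one line and the overall proof stands.
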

\begin{proof}
	See \cite[Theorem 3.3]{agn_martin2015rank}.
\end{proof}
\begin{remark}
	Note that $K(F)=\max\{\frac{\lambda_1}{\lambda_2},\frac{\lambda_2}{\lambda_1}\}$ for all $F\in\GLp(2)$ with singular values $\lambda_1,\lambda_2$, thus the representation function $\hhat$ in \eqref{eq:planarConformallyInvariantRepresentations} is simply the restriction of $h$ to $[1,\infty)$.
\end{remark}
\begin{corollary}
\label{corollary:strictEllipticityIsochoric}
	Under the conditions of Theorem \ref{theorem:ellipticityIsochoric}, the following are equivalent:
	\begin{itemize}
	\item[i)] $W$ is strictly rank-one convex, \qquad\qquad\qquad iii) $h$ is strictly convex on $\Rp$,
	\item[ii)] $g$ is separately strictly convex, \qquad\qquad\qquad iv) $h$ is strictly convex and increasing on $[1,\infty)$.
	\end{itemize}
\end{corollary}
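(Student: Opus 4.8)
The plan is to close the cycle of implications (i)$\Rightarrow$(ii)$\Leftrightarrow$(iii)$\Leftrightarrow$(iv)$\Rightarrow$(i), using the non-strict characterization of Theorem \ref{theorem:ellipticityIsochoric} as a black box so that $W$, $g$ and the linear distortion $K$ are already known to be rank-one convex and I only have to track where strictness is gained or lost. Three of the four equivalences are purely one-dimensional statements about the profile function $h$, so the substance lies entirely in connecting strict rank-one convexity of $W$ on $\GLp(2)$ to the strict convexity of $g$ and $h$.

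First the elementary equivalences. For (ii)$\Leftrightarrow$(iii) I would note that $g(\lambda_1,\lambda_2)=h(\lambda_1/\lambda_2)=h(\lambda_2/\lambda_1)$ is symmetric, $g(\lambda_1,\lambda_2)=g(\lambda_2,\lambda_1)$, so separate strict convexity reduces to strict convexity of the single partial map $s\mapsto g(s,\lambda_2)=h(s/\lambda_2)$; as $s\mapsto s/\lambda_2$ is an affine bijection of $\Rp$, this holds iff $h$ is strictly convex on $\Rp$. For (iii)$\Leftrightarrow$(iv) the lever is the symmetry $h(t)=h(1/t)$: if $h$ is strictly convex on $\Rp$, then for $1\le t_1<t_2$ the point $t_1$ is a strict convex combination of $1/t_2<1\le t_1<t_2$, and strict convexity together with $h(1/t_2)=h(t_2)$ forces $h(t_1)<h(t_2)$, i.e.\ $h$ is increasing on $[1,\infty)$; conversely, writing $h$ on $(0,1]$ as the composition of the strictly convex increasing $h|_{[1,\infty)}$ with the strictly convex bijection $t\mapsto 1/t$ shows $h$ is strictly convex on $(0,1]$, and the junction condition at $t=1$ (left slope $\le0\le$ right slope) upgrades this to strict convexity on all of $\Rp$. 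Finally, for (i)$\Rightarrow$(ii) I would restrict $W$ to the rank-one line $t\mapsto\operatorname{diag}(\lambda_1+t,\lambda_2)=\operatorname{diag}(\lambda_1,\lambda_2)+t\,(e_1\otimes e_1)$, which stays in $\GLp(2)$ for $s=\lambda_1+t\in\Rp$ and along which $W=g(\lambda_1+t,\lambda_2)$; strict rank-one convexity then yields strict convexity of $g(\cdot,\lambda_2)$ on $\Rp$, and symmetrically of $g(\lambda_1,\cdot)$.

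The substance, and the main obstacle, is the implication (iv)$\Rightarrow$(i). Here the central lemma I would establish is that the classical distortion $\K$ is \emph{strictly} rank-one convex: along any non-trivial rank-one line $F(t)=F_0+t\,(\xi\otimes\eta)\in\GLp(2)$ the quantity $\norm{F(t)}^2$ is a genuine (positive-leading-coefficient) quadratic in $t$ while $\det F(t)$ is affine, and a direct differentiation of $\K(F(t))=\tfrac12\norm{F(t)}^2/\det F(t)$ gives
\[
	\tfrac{d^2}{dt^2}\K(F(t))=\frac{A\,p^2-B\,m\,p+C\,m^2}{\det F(t)^3}\,,
\]
where $A=\norm{\xi}^2\norm{\eta}^2$, $B=2\langle F_0,\xi\otimes\eta\rangle$, $C=\norm{F_0}^2$, $p=\det F_0$ and $m=\langle\Cof F_0,\xi\otimes\eta\rangle$. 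The numerator is a quadratic form in $(p,m)$ whose discriminant $B^2-4AC\le0$ by Cauchy--Schwarz, with \emph{strict} inequality because $F_0$ has full rank while $\xi\otimes\eta$ has rank one, so the form is positive definite; since $p=\det F_0>0$, the numerator is strictly positive and $\K(F(\cdot))$ is strictly convex. The strict Cauchy--Schwarz step, which genuinely uses $F_0\in\GLp(2)$ rather than merely $F_0\ne0$, is the delicate point.

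To finish (iv)$\Rightarrow$(i) I would combine this with the representation $W(F)=h(K(F))$. Applying Theorem \ref{theorem:ellipticityIsochoric} with the choice $h(t)=t$ shows that $K$ is rank-one convex, so $\kappa(t):=K(F(t))\in[1,\infty)$ is convex; and since $\K=\tfrac12(K+1/K)$ is a strictly increasing function of $K\ge1$, the strict convexity of $\K(F(\cdot))$ forces $\kappa$ to be non-constant on every non-degenerate subinterval. Now $h\circ\kappa$ is convex (a convex increasing $h$ composed with the convex $\kappa$); were it affine on some subinterval, then $\kappa=h^{-1}\circ(\text{affine})$ would be concave there, since $h^{-1}$ is concave as the inverse of a convex increasing function, hence $\kappa$ would be affine, and then either $\kappa$ is constant (excluded) or $h$ is affine on a non-degenerate interval (contradicting strict convexity of $h$). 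Thus $t\mapsto W(F(t))=h(\kappa(t))$ is convex but affine on no subinterval, i.e.\ strictly convex, which is precisely strict rank-one convexity of $W$. Routing the convexity bookkeeping through the smooth distortion $\K$ rather than through the operator-norm quantity $K$ is what lets me sidestep the non-smoothness of $K$.
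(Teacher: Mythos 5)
Your proof is correct, and your handling of the three elementary equivalences matches the paper's in substance: you restrict to the rank-one line $\operatorname{diag}(\lambda_1+t,\lambda_2)=\operatorname{diag}(\lambda_1,\lambda_2)+t\,(e_1\otimes e_1)$ for i)\,$\Rightarrow$\,ii), evaluate at $\lambda_2=1$ for ii)\,$\Rightarrow$\,iii), and exploit the symmetry $h(t)=h(1/t)$ around iii) and iv) (the paper instead closes a cycle i)\,$\Rightarrow$\,ii)\,$\Rightarrow$\,iii)\,$\Rightarrow$\,iv)\,$\Rightarrow$\,i) and quotes the non-strict Theorem \ref{theorem:ellipticityIsochoric} for the monotonicity step). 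For the substantive implication iv)\,$\Rightarrow$\,i), however, you take a genuinely different route. The paper verifies the five strict Knowles--Sternberg inequalities (Lemma \ref{lemma:knowlesSternberg}) for $\widetilde g(\lambda_1,\lambda_2)=\bigl(\lambdamax/\lambdamin-1\bigr)^2$, concludes that $F\mapsto(K(F)-1)^2$ is strictly rank-one convex, and then feeds $f(t)=K(F+t\,\xi\otimes\eta)$ through two abstract composition lemmas: strict convexity of $(\,\cdot\,-1)^2\circ f$ forces $f$ to be \emph{semi-strictly} convex (Lemma \ref{lemma:semstrictConvexityImpliedByStrictComposition}), and a strictly convex, increasing $h$ composed with a semi-strictly convex $f$ is strictly convex (Lemma \ref{lemma:semstrictConvexityImpliesStrictCompositions}). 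You replace both ingredients: your central lemma is the strict rank-one convexity of the \emph{smooth} distortion $\K$, obtained from the explicit formula $\frac{d^2}{dt^2}\K(F(t))=(Ap^2-Bmp+Cm^2)/\det F(t)^3$ together with the strict Cauchy--Schwarz estimate $B^2<4AC$, valid precisely because the invertible $F_0$ cannot be a multiple of the rank-one matrix $\xi\otimes\eta$; this computation is sound and in fact coincides with the paper's own Section 3.1 estimate \eqref{eq:secondderivation} specialized to $\psi=\operatorname{id}$, so the two proofs share the same analytic core. You then transfer strictness through $\K=\frac12(K+1/K)$ to conclude that $\kappa=K(F(\cdot))$ is convex and non-constant on every non-degenerate subinterval --- which is exactly the paper's semi-strict convexity of $f$ in disguise --- and finish with the affine-interval dichotomy via concavity of $h^{-1}$, a clean substitute for Lemma \ref{lemma:semstrictConvexityImpliesStrictCompositions}. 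What each approach buys: yours is self-contained, bypassing the Knowles--Sternberg criterion and Silhavy's calculations entirely and confining all differentiation to the smooth $\K$ (thereby correctly sidestepping the non-smoothness of both $K$ and $h$, which is why a naive second-derivative argument cannot be run on $W=h(K)$ directly); the paper's route is more modular, isolating reusable semi-strictness lemmas. One cosmetic point you share with the paper: invoking Theorem \ref{theorem:ellipticityIsochoric} \enquote{with $h=\operatorname{id}$} is a mild abuse, since $h$ must satisfy $h(t)=h(1/t)$; the intended choice is $h(t)=\max\{t,1/t\}$, whose restriction to $[1,\infty)$ is the identity.
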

\begin{proof}
See Appendix \ref{appendix:strictConvexity}.
\end{proof}

\subsection{A three-dimensional example}
\label{section:threeDimensionalEnergy}

In the three-dimensional case, we consider the conformally invariant energy
\[
	W\colon\GLp(3)\to\R\,,\quad W(F)=\tr\left(\frac{C}{\det(C)^\frac{1}{3}}\right)-3=\frac{\norm{F}^2}{\det(F)^\frac{2}{3}}-3\,,
\]
where $C=F^TF$ denotes the right Cauchy-Green tensor. To check for (strict) rank-one convexity, we compute
\begin{align}
	D_FW(F).[H]&=\frac{\iprod{2F-\frac{2}{3}\norm{F}^2\cdot F^{-T},H}}{\det(F)^\frac{2}{3}},\\
	D^2W(F).[H,H]&=-\frac{8}{3}\frac{\iprod{ F^{-T},H}\iprod{ F,H}}{\det(F)^\frac{2}{3}}+2\frac{\iprod{ H,H}}{\det(F)^\frac{2}{3}}
	+\frac{4}{9}\frac{\norm{F}^2}{\det(F)^\frac{2}{3}}\iprod{ F^{-T},H}^2+\frac{2}{3}\frac{\norm{F}^2}{\det(F)^\frac{2}{3}}\iprod{ F^{-T}H^TF^{-T},H}\notag\\
	&\geq \frac{4}{3}\left(\frac{\iprod{ F^{-T},H}^2\norm{F}^2+\norm{H}^2}{\det(F)^\frac{2}{3}}\right)+2\frac{\norm{H}^2}{\det(F)^\frac{2}{3}}
	+\frac{4}{9}\frac{\norm{F}^2}{\det(F)^\frac{2}{3}}\iprod{ F^{-T},H}^2\notag\\
	&\quad+\frac{2}{3}\frac{\norm{F}^2}{\det(F)^\frac{2}{3}}\iprod{ F^{-T}H^TF^{-T},H}\,.
\end{align}
Hence for $H\colonequals\xi\otimes\eta$ with $\xi,\eta\in\R^3$, we obtain
\begin{align*}
	D^2W(F).[H,H]\geq\frac{4}{3}\cdot\frac{1}{\det(F)^\frac{2}{3}}\cdot\norm{\xi}^2\norm{\eta}^2+2\cdot\frac{1}{\det(F)^\frac{2}{3}}\cdot\norm{\xi}^2\norm{\eta}^2
	=\frac{10}{3}\cdot\det(F)^{-\frac{2}{3}}\cdot\norm{\xi}^2\norm{\eta}^2
	= c\cdot\norm{\xi}^2\norm{\eta}^2
\end{align*}
with $c=\frac{10}{3}\cdot\det(F)^{-\frac{2}{3}}$, thus $W$ is strictly rank-one convex (and even polyconvex \cite{Hartmann_Neff02}).
The Cauchy stress induced by $W$ is given by
\begin{align}
	\sigma(F)&=D_FW(F)\cdot\Cof(F)^{-1}
	=\frac{2F-\frac{2}{3}\norm{F}^2\cdot F^{-T}}{\det(F)^\frac{2}{3}}\cdot\frac{1}{\det(F)}\cdot F^T\notag\\
	&=2\cdot\frac{FF^T}{\det(F)^\frac{2}{3}}\cdot\frac{1}{\det(F)}-\frac{2}{3}\cdot\frac{\norm{F}^2\cdot\id}{\det(F)^\frac{2}{3}}\cdot\frac{1}{\det(F)}\,;
\end{align}
note that $\sigma(F)|_{F=\id}=0$. Moreover, if $F\in\CSO(3)$, then
\begin{align*}
	\sigma(F)%|_{F=\grad\varphi, \varphi\text{ conformal}}
	=\frac{2}{\det(F)}\cdot\id-\frac{2}{3}\cdot\dyniprod{ \frac{FF^T}{\det(F)^\frac{2}{3}},\id}\cdot\frac{1}{\det(F)}\cdot\id
	=\frac{2}{\det(F)}\cdot\id-\frac{2}{\det(F)}\cdot\id=0\,,
\end{align*}
thus the mapping $x\mapsto\sigma(\grad\varphi(x))$ is constant on $\Omega$ if $\varphi$ is a conformal mapping.

We also remark that for arbitrary dimension $n\geq3$, the energy function $W\colon\GLp(n)\to\R$ with $W(F)=\frac{\norm{F}^2}{\det(F)^\frac{2}{n}}-n$ is strictly rank-one convex on $\GLpn$ as well.
%%%%%%%%%%%%%%%%%%%%%%%%%%%%%%%%%%%%%%%%%%%%%%%%%%%%%%%%%%%%%%%%%%%%%%%%%%%%%%%%%%%%%%%%%%%%%
\section{Main result}\label{sec:results}
We can now construct our counterexample to the conjecture that homogeneous Cauchy stress implies homogeneity of the deformation for strictly rank-one convex elastic energies. The first result concerns the planar case and a purely isochoric (conformally invariant) energy expression.
\begin{proposition}
\label{prop:generalcase2D}
	For $\K(F)=\frac{1}{2}\frac{\norm{F}^2}{\det F}$ and
	$K(F)=\frac{\opnorm{F}^2}{\det F}=\frac{\lambdamax}{\lambdamin}$, let $W\col\GLp(2)\to\R$ be given by
	\begin{align}
		W(F) =
		\frac{\lambdamax^2}{\lambdamin^2}-1 = K(F)^2-1 = \left( \K(F)+\sqrt{\K(F)^2-1} \right)^2-1 \equalscolon \psi(\K(F))
	\end{align}
	for all $F\in\GLp(2)$ with singular values $\lambdamax\geq\lambdamin$, and let $\varphi\col\Omega\to\R^2$ be any non-affine conformal deformation of a planar body $\Omega\subset\R^2$. Then the Cauchy stress corresponding to the non-homogeneous deformation $\varphi$ induced by the strictly elliptic isotropic energy $W$ is constant.
\end{proposition}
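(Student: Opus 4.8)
The plan is to split the proposition into two parts: that $W$ is strictly rank-one convex on $\GLp(2)$, and that the field $x\mapsto\sigma(\grad\varphi(x))$ is constant. Conformal invariance of $W$ is immediate, since $W$ is expressed solely through the singular-value ratio $K(F)=\lambdamax/\lambdamin$, which is unchanged under $F\mapsto a\,Q_1FQ_2$ with $a>0$ and $Q_1,Q_2\in\SO(2)$; thus $W$ is objective, isotropic and isochoric.

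For strict ellipticity I would invoke Corollary \ref{corollary:strictEllipticityIsochoric} directly. In the notation of Theorem \ref{theorem:ellipticityIsochoric}, the representation of $W$ in the variable $t=\lambda_1/\lambda_2$ is $h(t)=t^2-1$ for $t\geq1$ and $h(t)=t^{-2}-1$ for $t\in(0,1]$, so its restriction to $[1,\infty)$ is $\hhat(K)=K^2-1$. There $\hhat'(K)=2K>0$ and $\hhat''(K)=2>0$, i.e.\ $\hhat$ is strictly convex and strictly increasing, which by item iv) of the Corollary is equivalent to strict rank-one convexity of $W$. The same strict monotonicity, together with $K(F)\geq1$ and $K(F)=1$ exactly on $\CSO(2)$, shows that $W\geq0$ with equality if and only if $F$ is conformal, so $W$ attains its global minimum precisely on the conformal matrices.

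It remains to compute the stress. Since $\varphi$ is conformal, $\grad\varphi(x)\in\CSO(2)$ for every $x\in\Omega$, so each deformed state is a global minimizer and $W(\grad\varphi(x))\equiv0$. Heuristically this matches \eqref{eq:cauchy-isochoric}: where $W$ is differentiable the stress reads $\sigma(F)=\psi'(\K(F))\bigl[\frac{FF^T}{\det(F)^2}-\frac{\K(F)}{\det F}\,\id\bigr]$, and the bracket vanishes on $\CSO(2)$. One thus expects the constant field $\sigma\equiv0$.

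The genuine obstacle --- and the feature distinguishing this example from the smooth energies treated earlier --- is that $W$ is not differentiable exactly on $\CSO(2)$, precisely the locus on which the stress must be evaluated. Indeed, by conformal invariance we may take $F=\id$, and a short computation gives $W(\id+tH)=2\sqrt2\,t\,\norm{\dev\sym H}+O(t^2)$ as $t\to0^+$, which is sublinear but not linear in $H$; equivalently $\psi'(\K)\to+\infty$ as $\K\to1$, so the bracket formula degenerates into the indeterminate product $\infty\cdot0$ on $\CSO(2)$, and the one-sided limits of $\sigma(F)$ as $F\to\CSO(2)$ genuinely disagree. The vanishing of the stress can therefore not be obtained by continuity or from the naive derivative formula. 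Instead I would argue it from the minimization property: because each $\grad\varphi(x)$ is a global minimizer of $W$, the one-sided directional derivatives $\frac{d}{dt}\big|_{t=0^+}W(\grad\varphi(x)+tH)$ are nonnegative for every $H$, so the zero tensor lies in the generalized subdifferential of $W$ at $\grad\varphi(x)$ and is the admissible (constant) constitutive response $\sigma\equiv0$. Making this last step precise is the point I expect to require the most care.
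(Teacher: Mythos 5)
Your proposal coincides with the paper's proof on the convexity half: the paper likewise obtains strict rank-one convexity of $W$ by invoking Corollary \ref{corollary:strictEllipticityIsochoric} (your verification that $\hhat(K)=K^2-1$ is strictly convex and increasing on $[1,\infty)$, equivalently that $h(t)=t^2-1$ for $t\geq 1$, $h(t)=t^{-2}-1$ for $t\leq 1$ is strictly convex on $\Rp$, is exactly what item iv) asks for). On the stress half you genuinely diverge, and your diagnosis is correct and sharper than the paper's own argument. The paper disposes of this step in one line by citing \eqref{eq:cauchy-isochoric}, i.e.\ $\sigma(F)=\psi'(1)\,\bigl[\frac{FF^T}{\det(F)^2}-\frac{1}{\det F}\,\id\bigr]=0$ whenever $\K(F)=1$; but that identity was derived in Section 3 under the standing hypothesis that $\psi$ is sufficiently smooth with finite $\psi'(1)>0$, whereas the representation here, $\psi(\K)=\bigl(\K+\sqrt{\K^2-1}\bigr)^2-1$, has $\psi'(\K)\to+\infty$ as $\K\to 1^+$. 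Your computations check out: along $F_t=\operatorname{diag}(1+t,1-t)$ one finds $W(F_t)=4|t|/(1-|t|)^2$, so $W$ has a corner precisely on $\CSO(2)$ (consistent with your expansion $W(\id+tH)=2\sqrt{2}\,t\,\norm{\dev\sym H}+O(t^2)$), and the classical stress satisfies $\sigma(F_t)=\frac{2(1+t)}{(1-t)^3}\operatorname{diag}(1,-1)\to 2\operatorname{diag}(1,-1)$ as $t\to 0^+$ but $\to-2\operatorname{diag}(1,-1)$ as $t\to 0^-$, so the product $\psi'(1)\cdot 0$ in \eqref{eq:cauchy-isochoric} is genuinely indeterminate and $\sigma$ admits no continuous extension to $\CSO(2)$. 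Your repair --- each $\grad\varphi(x)$ is a global minimizer of the locally Lipschitz $W$, hence every one-sided directional derivative is nonnegative, $0$ lies in the generalized subdifferential, and $\sigma\equiv 0$ is taken as the constitutive selection --- is the natural one, and the remaining ``care'' you anticipate is definitional rather than mathematical: one must simply commit to a notion of Cauchy stress for a non-$C^1$ energy, since the single-valued formula $\sigma(F)=D_FW(F)\cdot\Cof(F)^{-1}$ is undefined on $\CSO(2)$.

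It is worth noting that the subtlety evaporates if one replaces the isochoric term by a smooth representative: $\psi(\K)=\K^2-1$, i.e.\ $h(t)=\frac{1}{4}\bigl(t+\frac{1}{t}\bigr)^2-1$, is strictly convex on $\Rp$ and hence strictly rank-one convex by the same Corollary, is $C^\infty$ on $\GLp(2)$ because $\K$ is a rational function of the entries of $F$, and for it \eqref{eq:cauchy-isochoric} applies verbatim with $\psi'(1)=2$; similarly, the three-dimensional energy of Section \ref{section:threeDimensionalEnergy} is smooth, which is why Proposition \ref{prop:generalcase3D} requires no such repair. So your proposal is correct, reproduces the paper's route for strict ellipticity, and on the stress step substitutes a minimality/subdifferential argument for the paper's citation of \eqref{eq:cauchy-isochoric} --- a citation which, strictly read, oversteps the smoothness hypotheses under which that equation was established for precisely the energy $K(F)^2-1$ chosen in the proposition.
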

\begin{proof}
	According to Corollary \ref{corollary:strictEllipticityIsochoric}, the given energy is strictly rank-one convex (and indeed polyconvex). Moreover, the deformation is non-affine, i.e.\ non-homogeneous by assumption; cf.\ Section \ref{sec:conformalmappings} for the existence of non-trivial examples of conformal mappings. Finally, since for any isochoric energy the Cauchy stress corresponding to $F=\grad\varphi(x)\in\CSO(2)$ is zero (cf.\ \eqref{eq:cauchy-isochoric}) for all $x\in\Omega$, the induced Cauchy stress is constant.
\end{proof}
\begin{remark}
	In particular, the Cauchy stress response relation is highly non-invertible in this case.
\end{remark}
With the energy considered in Section \ref{section:threeDimensionalEnergy}, the above result immediately applies to the three-dimensional case as well.
\begin{proposition}\label{prop:generalcase3D}
	Let $W\colon\GLp(3)\to\R$ be given by
	\begin{align}\label{eq:generalcase}
		W(F)=\frac{\norm{F}^2}{\det(F)^\frac{2}{3}}-3=\tr\left(\frac{C}{(\det C)^\frac{1}{3}}-\id\right)
	\end{align}
	with $C=F^TF$ (right Cauchy-Green tensor) and let $\widetilde{\varphi}\col\widetilde{\Omega}\to\R^3$ be any non-affine conformal deformation of a body $\widetilde{\Omega}\subset\R^3$. Then the Cauchy stress corresponding to the non-homogeneous deformation $\widetilde{\varphi}$ induced by the strictly elliptic isotropic energy $W$ is constant.
	\pushQED{\qed}\qedhere\popQED
\end{proposition}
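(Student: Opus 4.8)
The plan is to follow the structure of the proof of Proposition~\ref{prop:generalcase2D} essentially verbatim, transporting it from $\GLp(2)$ to $\GLp(3)$. All of the analytic content has in fact already been established in Section~\ref{section:threeDimensionalEnergy}, so the argument reduces to assembling three facts. First, the energy $W$ is strictly rank-one convex: the explicit second-derivative estimate carried out in Section~\ref{section:threeDimensionalEnergy} gives $D^2W(F).[\xi\otimes\eta,\xi\otimes\eta]\geq\frac{10}{3}\,\det(F)^{-2/3}\,\norm{\xi}^2\norm{\eta}^2>0$ for all $F\in\GLp(3)$ and all nonzero $\xi,\eta\in\R^3$, so $W$ is strictly elliptic (and indeed polyconvex). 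Second, the Cauchy stress $\sigma(F)=D_FW(F)\cdot\Cof(F)^{-1}$ was computed there to vanish identically on the conformal group: for $F\in\CSO(3)$ one finds $\sigma(F)=\frac{2}{\det(F)}\id-\frac{2}{\det(F)}\id=0$.

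The third ingredient is the existence of a genuinely non-affine conformal deformation of a three-dimensional body. This is supplied by Section~\ref{sec:conformalmappings}: the M\"obius-type map $\widetilde{\varphi}(x)=\frac{1}{\norm{x}^2}(x_1,-x_2,x_3)^T$ on $\widetilde{\Omega}\subset\R^3\setminus\{0\}$ is conformal, i.e.\ $\grad\widetilde{\varphi}(x)\in\CSO(3)$ for every $x$, yet is clearly not affine. Given any such $\widetilde{\varphi}$, conformality means $\grad\widetilde{\varphi}(x)\in\CSO(3)$ pointwise, and combining this with the vanishing of $\sigma$ on $\CSO(3)$ yields that the field $x\mapsto\sigma(\grad\widetilde{\varphi}(x))$ is identically zero on $\widetilde{\Omega}$, hence constant. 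This is exactly the conclusion claimed.

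I do not expect any genuine obstacle here, since the substantive work---verifying strict rank-one convexity and the vanishing of the isochoric Cauchy stress on $\CSO(3)$---was front-loaded into Section~\ref{section:threeDimensionalEnergy}. If one wishes to be scrupulous, the only point deserving a remark is that such non-affine conformal maps exist at all in dimension three; this is reassured by Remark~\ref{rem:moebiusconformal}, according to which for $n\geq3$ every non-trivial conformal mapping is a M\"obius transformation, so the explicit example above is representative of the whole admissible class.
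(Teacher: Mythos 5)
Your proposal is correct and coincides with the paper's own (implicit) argument: the proposition is stated with an immediate \verb|\qed| precisely because all the work---the strict rank-one convexity estimate $D^2W(F).[\xi\otimes\eta,\xi\otimes\eta]\geq\frac{10}{3}\det(F)^{-2/3}\norm{\xi}^2\norm{\eta}^2$ and the vanishing of $\sigma$ on $\CSO(3)$---was already carried out in Section~\ref{section:threeDimensionalEnergy}, with the Möbius map \eqref{eq:conformalmapping3} and Remark~\ref{rem:moebiusconformal} supplying the non-affine conformal deformations, exactly as you assemble them. No gaps.
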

Notice that in Proposition \ref{prop:generalcase3D}, the only admissible non-affine conformal mappings are Möbius transformations (see Remark \ref{rem:moebiusconformal}). As an example, consider the mapping
\begin{align}\label{eq:conformalmapping3}
	\widetilde{\varphi}\col\Omega\subset\R^3\setminus\{0\}\to\R^3\,,\quad
	\widetilde{\varphi}(x)=\frac{1}{\norm{x}^2}\left(x_1,-x_2,x_3\right)^T\,.
\end{align}
Then, similar to \eqref{eq:detconformal},
\begin{align}\label{eq:conformal3D}
	\grad\widetilde{\varphi}(x)=\frac{1}{\norm{x}^4}\cdot\left(\begin{array}{ccc}
										\norm{x}^2-2x_1^2 & -2x_1x_2 & -2x_1x_3\\
										2x_1x_2 & -\norm{x}^2+2x_2^2 & 2x_2x_3\\
										-2x_1x_3 & -2x_2x_3 & \norm{x}^2-2x_3^2
										\end{array}\right), \quad
	\det(\grad\widetilde{\varphi})=\frac{1}{\norm{x}^6}.
\end{align}

\subsection{A physically viable example}
In our examples, we observe that the reference configuration is stress free, i.e.\ $\sigma(\id)=0$. However, since $\sigma(F)=0$ for all $F\in\CSOn$, the stress free configuration is not unique. This constitutive shortcoming can be circumvented by adding a volumetric energy term of the form $F\mapsto f(\det F)$ for some differentiable function $f\col(0,\infty)\to\R$ such that $f'(t)\neq0$ for all $t\neq1$.

In order for such an additively coupled energy to be suitable for our purpose, of course, it must be ensured that a constant Cauchy stress tensor field can still be achieved with an inhomogeneous conformal deformation. We therefore choose $f$ such that $f'$ is constant, i.e.\ such that $f$ is linear, on an interval $[a,b]\subset(1,\infty)$. For example, consider the function
\begin{equation}\label{eq:volumetricEnergyLinearOnInterval}
	f\col(0,\infty)\to\R\,,\quad
	f(t)=
	\begin{cases}
		\ln^2(t) &: t<e\\
		1+\frac{2(t-e)}{e} &: e\leq t\leq c\\
		1+\frac2e\.(e^{t-c}+(c-e-1)) &: c\leq t
	\end{cases}
\end{equation}
for some $c>e$, as shown in Fig.\ \ref{fig:Wvol}. Then $f$ is convex and continuously differentiable with $f'(t)=\frac2e$ for all $t\in[e,c]$. In particular, adding the volumetric term $F\mapsto f(\det F)$ to an energy function preserves (strict) rank-one convexity, and its contribution to the Cauchy stress tensor \cite{richter1948} is constant if $\det(F)\in[e,c]$. Combined with Propositions \ref{prop:generalcase2D} and \ref{prop:generalcase3D}, these observations immediately imply the following main result.

\begin{proposition}\label{prop:final}
For $n=2$ and $n=3$, let $W\col\GLpn\to\R$ be given by
\begin{align}
\label{eq:finalexample2D}
	W(F)
	&= \Wiso\left(\frac{F}{(\det F)^\frac{1}{2}}\right)+\Wvol(\det F)
	= K(F)^2 + f(\det F) - 1 = \frac{\lambdamax^2}{\lambdamin^2} + f(\lambdamax\.\lambdamin) - 1
\intertext{and}
\label{eq:finalexample3D}
	W(F)
	&= \Wiso\left(\frac{F}{(\det F)^\frac{1}{3}}\right)+\Wvol(\det F)
	=\left(\frac{\norm{F}^2}{\det(F)^\frac{2}{3}}-3\right)+f(\det F)\,,
\end{align}
respectively, where $f$ is given by \eqref{eq:volumetricEnergyLinearOnInterval}. Then for any non-affine conformal deformation mapping $\varphi\col\Omega\to\R^n$ on an open and bounded domain $\Omega\subset\R^n$ such that $\det(\grad\varphi(x))\in[e,c]$ for all $x\in\Omega$, the Cauchy stress tensor field induced by $W$ corresponding to the non-homogeneous deformation $\varphi$ is constant.
\end{proposition}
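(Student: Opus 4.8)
The plan is to exploit the additive volumetric-isochoric structure of $W$, which induces a corresponding additive decomposition of the Cauchy stress into an isochoric and a volumetric contribution, and to verify that each contribution is constant along the conformal deformation $\varphi$. First I would record that for an additively coupled energy $W(F) = \Wiso(F/(\det F)^{1/n}) + f(\det F)$, the first Piola–Kirchhoff stress $D_F W$ is the sum of the two pieces, so the Cauchy stress $\sigma(F) = D_F W(F)\cdot\Cof(F)^{-1}$ splits as $\sigma(F) = \sigma_{\mathrm{iso}}(F) + \sigma_{\mathrm{vol}}(F)$. Since $\Wiso$ is itself isochoric, $\Wiso(F/(\det F)^{1/n}) = \Wiso(F)$ as a function on $\GLpn$, so $\sigma_{\mathrm{iso}}$ is exactly the Cauchy stress of the purely conformally invariant energy already treated in Propositions \ref{prop:generalcase2D} and \ref{prop:generalcase3D}.

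For the isochoric part I would simply invoke the computations already carried out: for any conformally invariant energy the Cauchy stress vanishes on the special conformal group $\CSOn$, cf.\ \eqref{eq:cauchy-isochoric} in the planar case and the explicit evaluation of $\sigma(F)$ at $F\in\CSO(3)$ in Section \ref{section:threeDimensionalEnergy}. As $\varphi$ is conformal, $\grad\varphi(x)\in\CSOn$ for every $x\in\Omega$, whence $\sigma_{\mathrm{iso}}(\grad\varphi(x)) = 0$ throughout the body. For the volumetric part the decisive computation is $D_F f(\det F) = f'(\det F)\,\Cof(F)$ by Jacobi's formula, so that $\sigma_{\mathrm{vol}}(F) = f'(\det F)\,\Cof(F)\cdot\Cof(F)^{-1} = f'(\det F)\cdot\id$; the volumetric Cauchy stress is therefore a scalar multiple of the identity depending only on $\det F$. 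Using the hypothesis $\det(\grad\varphi(x))\in[e,c]$ together with the construction of $f$ in \eqref{eq:volumetricEnergyLinearOnInterval}, on which $f'\equiv\tfrac2e$, I obtain $\sigma_{\mathrm{vol}}(\grad\varphi(x)) = \tfrac2e\,\id$ independently of $x$. Combining the two contributions gives $\sigma(\grad\varphi(x)) = \tfrac2e\,\id$, a constant tensor field on $\Omega$.

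It remains to confirm that $W$ is strictly rank-one convex, which follows additively: $\Wiso$ is strictly rank-one convex by Propositions \ref{prop:generalcase2D} and \ref{prop:generalcase3D}, and since $F\mapsto\det F$ is rank-one affine and $f$ is convex, the term $F\mapsto f(\det F)$ is rank-one convex, so the sum retains strict rank-one convexity. The deformation $\varphi$ is non-affine by assumption (and such mappings exist, cf.\ Section \ref{sec:conformalmappings}), which completes the counterexample.

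The main obstacle — and really the only nontrivial point — is to justify the additive split of the Cauchy stress cleanly and to pin down the volumetric contribution as $f'(\det F)\cdot\id$, i.e.\ the Richter representation of the volumetric stress. Once this is in place, everything reduces to invoking the conformal vanishing \eqref{eq:cauchy-isochoric} and the flatness of $f'$ on $[e,c]$, so no genuinely new estimate is required; the role of the interval condition $\det(\grad\varphi(x))\in[e,c]$ is precisely to keep $\sigma_{\mathrm{vol}}$ from varying with $x$.
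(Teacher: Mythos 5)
Your proposal is correct and takes essentially the same route as the paper's own proof: the additive split $\sigma(F)=\sigma_{W_{\text{iso}}}(F)+\sigma_{W_{\text{vol}}}(F)$, the vanishing of the isochoric Cauchy stress on $\CSOn$ via \eqref{eq:cauchy-isochoric} and Propositions \ref{prop:generalcase2D} and \ref{prop:generalcase3D}, and the identity $\sigma_{\mathrm{vol}}(F)=f'(\det F)\cdot\id=\frac{2}{e}\cdot\id$ under the hypothesis $\det(\grad\varphi(x))\in[e,c]$. Your justification of the rank-one convexity of the volumetric term (rank-one affinity of $F\mapsto\det F$ plus convexity of $f$) is only a cosmetic variant of the paper's appeal to polyconvexity, so no substantive difference remains.
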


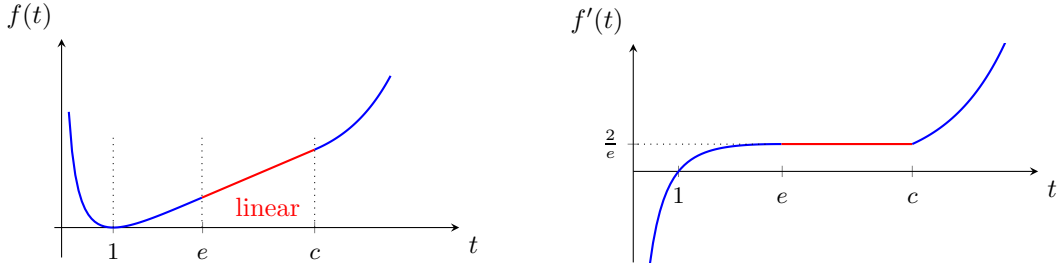
\begin{figure}[H]
	\centering
	\begin{minipage}[c]{0.45\linewidth}
	\centering
	\begin{tikzpicture}
	\begin{axis}[axis x line=center,
		axis y line=center,
		xtick={0,1,2.73,4.9},
		ytick={0},
		xticklabels={0,1\vphantom{b},$e\vphantom{b}$,$c\vphantom{b}$},
		yticklabels={0},
		xlabel={$t$},
		ylabel={$f(t)$},
		xlabel style={below right},
		ylabel style={above left},
		xmin=-0.14,
		xmax=7.7,
		ymin=-1,
		ymax=6.3,
		samples=42,
		width=.931\linewidth,
		height=.6\linewidth,
		domain=0:4.2
	]
		\addplot[domain=0.14:e,color=blue] { ln(x)^2 };
		\addplot[domain=e:4.9,color=red] { 1+2/e*(x-e) } node[pos=.21,below right] {linear};
		\addplot[domain=4.9:6.37,color=blue] { 1+2/e*(exp(x-4.9)+(4.9-e-1))};
		\draw[dotted] (axis cs: 1,0) -- (axis cs: 1,3);
		\draw[dotted] (axis cs: e,0) -- (axis cs: e,3);
		\draw[dotted] (axis cs: 4.9,0) -- (axis cs: 4.9,3);
	\end{axis}
	\end{tikzpicture}
	\end{minipage}
	\begin{minipage}[c]{0.45\linewidth}
	\centering
	\begin{tikzpicture}
	\begin{axis}[
		axis x line=center,
		axis y line=center,
		xtick={0,1,2.73,4.9},
		ytick={0,0.7329},
		xticklabels={0,1\vphantom{b},$e\vphantom{b}$,$c\vphantom{b}$},
		yticklabels={0,$\frac2e$},
		xlabel={$t$},
		ylabel={$f^\prime(t)$},
		xlabel style={below right},
		ylabel style={above left},
		xmin=0.245,
		xmax=7,
		ymin=-2.45,
		ymax=3.43,
		samples=42, 
		width=.931\linewidth,
		height=.6\linewidth,
		domain=0:4.2
	]
		\addplot[domain=0.343:e,color=blue] { 2*ln(x)/x)};
		\addplot[domain=e:4.9,color=red] { 2/e)};
		\addplot[domain=4.9:7,color=blue] { 2/e*(exp(x-4.9) };
		\draw[dotted] (axis cs: 0,0.7329) -- (axis cs: 2.45,0.7329);
	\end{axis}	
	\end{tikzpicture}
	\end{minipage}
	\caption{Visualization of the volumetric energy term. Note that $f$ is convex but not strictly convex.}
	\label{fig:Wvol}
\end{figure}
\begin{proof}
	Due to the rank-one convexity of $W_{\text{vol}}$, which follows via polyconvexity from the convexity of $f$, the energy $W$ is again strictly rank-one convex. The Cauchy stress for the non-affine conformal mapping $\widetilde{\varphi}$ in \eqref{eq:conformalmapping3} is given by
	\begin{align}
		\sigma(F)&=\sigma_{W_{\text{iso}}}(F)+\sigma_{W_{\text{vol}}}(F)=0+D_F\left(W_{\text{vol}}(\det F)\right)\cdot\Cof(F)^{-1}\notag\\
		&=\left[W^\prime_{\text{vol}}(\det F)\cdot \Cof(F)\right]\cdot\Cof(F)^{-1}=f^\prime(\det F)\cdot\id
		=\frac2e\cdot\id\,,
	\end{align}
	i.e.\ the Cauchy stress is constant, if $\det(\grad\varphi(x))\in(a,b)$ for all $x\in\Omega$.
\end{proof}
\begin{remark}
	Of course, a conformal mapping $\varphi$ with the required properties can easily be obtained via a simple reflection at a sphere if the domain $\Omega$ is chosen accordingly (as a subset of the annulus on which the condition on the determinant is satisfied).
\end{remark}
\begin{remark}
\label{remark:constitutivePropertiesOfExample}
	Observe carefully that the elastic energy $W$ in our final example, given by \eqref{eq:finalexample2D} and \eqref{eq:finalexample3D}, is polyconvex with $W(F)\to+\infty$ as $\det F\to0$, $W(F)\to+\infty$ for $\det F\to+\infty$ and $W(F)\to+\infty$ for $\norm{F}\to\infty$. Furthermore, the linearization of $W$ is well posed with $W_{\text{lin}}(\grad u)=2\,\norm{\dev_3\sym\grad u}^2+\frac{f^{\prime\prime}(1)}{2}\left(\tr(\grad u)\right)^2$, and $W$ has a unique stress-free state. It is also easy to see that the volumetric term $f$ can be modified such that the resulting energy $W$ is $C^\infty$-regular (and thus strictly Legendre-Hadamard elliptic as well).
\end{remark}
\begin{remark}
	While our counterexample in Proposition \ref{prop:final} is based on a special form of the volumetric-isochoric split, we believe that the situation is generic, i.e.\ that possible counterexamples are not restricted to this particular class of volumetric-isochoric split energy functions.
\end{remark}
\begin{remark}
	As an immediate consequence, Proposition \ref{prop:final} shows that a strictly rank-one convex energy may give rise to a non-invertible Cauchy stress-stretch relation. On the other hand, it has previously been shown \cite{agn_neff2015exponentiatedI} that an invertible Cauchy stress-stretch relation based on a volumetric-isochoric split may be locally non-elliptic. Therefore, invertibility of the mapping $B\to\sigma(B)$ from the left Cauchy-Green tensor $B=FF^T$ to the Cauchy stress is in general not related to rank-one convexity. Note that in linear isotropic elasticity, invertibility of the stress response $\varepsilon\to\sigma_\text{lin}(\varepsilon)$ to the infinitesimal strain tensor $\varepsilon=\sym\grad u$ amounts to strict convexity of the energy and implies (but is not implied by) rank-one convexity.
\end{remark}
\begin{remark}
\label{remark:result2DotherEnergies}
	According to Corollary \ref{corollary:strictEllipticityIsochoric}, Proposition \ref{prop:final} still holds if the isochoric part $K(F)^2-1$ in \eqref{eq:finalexample2D} is replaced by any strictly convex, increasing function of $K(F)$.
\end{remark}

\subsection{Connection to previous results}
As indicated in Section \ref{section:introduction}, for a strictly rank-one convex energy, a homogeneous Cauchy stress tensor field cannot correspond to a non-homogeneous deformation if the deformation gradient has discrete values, i.e.\ if the deformation is piecewise affine linear and satisfies the Hadamard jump condition, cf.\ Fig.\ \ref{fig:laminate}. Here, however, we have obtained a homogeneous (constant) Cauchy stress tensor which corresponds to a non-homogeneous deformation (a non-affine conformal mapping) for a strictly rank-one convex energy potential. Nevertheless, this result does not contradict the statement made in \cite{agn_neff2016injectivity}, since conformal mappings are not compatible with the Hadamard jump condition.

Recall that in order to satisfy the jump condition, the deformation gradients along the interface must be rank-one connected \cite{agn_mihai2016hyperelastic}. However, it can be shown \cite[Lemma 8.25]{rindler2018calculus} that $\rank(F_1-F_2)\neq1$ for all $F_1,F_2\in\CSO(2)$; note that
each $F\in\CSO(2)$ is of the form
\begin{align}
	F=%\lambda\cdot
	\matr{
		a & b\\
		-b & a
	}
	\quad\text{with}\quad
	a,b\in\R
\end{align}
and thus for any $F_1,F_2\in\CSO(2)$ with $F_1\neq F_2$,
\[
	\det(F_1-F_2)
	= \det\matr{a_1-a_2 & b_1-b_2\\ -b_1+b_2 & a_1-a_2}
	= (a_1-a_2)^2 + (b_1-b_2)^2
	> 0\,,
\]
hence $\rank(F_1-F_2)=2$.
In particular, for a (planar) conformal mapping $\varphi\col\Omega\subset\R^2\to\R^2$, the deformation gradients $\grad\varphi(x_1),\grad\varphi(x_2)$ at $x_1,x_2\in\Omega$ cannot differ by a rank-one matrix.
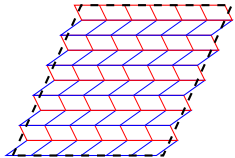
\begin{figure}[H]
		\centering
		\begin{tikzpicture}[scale=2]
				\def\shiftone{1.4}
			\def\shifttwo{-0.49}
			\def\numberOfSteps{5}
			\def\numberOfInnerLines{5}
			\def\height{(1/(2*\numberOfSteps))}
			\foreach \n in {1,...,\numberOfSteps}{
				\def\i{(\n-1)}
				\begin{scope}[cm={1,0,\shiftone,1,({\i*\height*(\shiftone+\shifttwo)},{2*\i*\height})}]
					\draw[blue] (0,0) -- (0,{\height}) -- (1,{\height}) -- (1,0) -- cycle;
					\foreach \k in {1,...,\numberOfInnerLines}{
						\draw[blue,very thin] ({\k/(\numberOfInnerLines+1)},0) -- ({\k/(\numberOfInnerLines+1)},{\height});
					}
				\end{scope}
				\begin{scope}[cm={1,0,\shifttwo,1,({\height*(\i*(\shiftone+\shifttwo)+\shiftone)},{\height*(2*\i+1)})}]
					\draw[red] (0,0) -- (0,{\height}) -- (1,{\height}) -- (1,0) -- cycle;
					\foreach \k in {1,...,\numberOfInnerLines}{
						\draw[red,very thin] ({\k/(\numberOfInnerLines+1)},0) -- ({\k/(\numberOfInnerLines+1)},{\height});
					}
				\end{scope}
			}
			
			\draw[dashed,thick] ({\height*(\shiftone+\shifttwo)/2},0) -- ({\height*(\shiftone+\shifttwo)/2+(\shiftone+\shifttwo)/2},1) -- ({\height*(\shiftone+\shifttwo)/2+1+(\shiftone+\shifttwo)/2},1) -- ({\height*(\shiftone+\shifttwo)/2+1},0) -- cycle;
	\end{tikzpicture}
	\caption{Structure of a rank-one laminate \cite{agn_neff2018cauchystress_proceedings,agn_schweickert2018nonhomogeneous}.}
	\label{fig:laminate}
\end{figure}
%%%%%%%%%%%%%%%%%%%%%%%%%%%%%%%%%%%%%%%%%
\section{Conclusion}
We have constructed non-trivial, strictly rank-one convex (polyconvex) elastic energies which induce a constant Cauchy stress tensor field for certain non-homogeneous deformations.
Our result motivates a further investigation of the role of invertibility assumptions on the Cauchy stress response function in terms of the left Cauchy-Green tensor $B$. Indeed, a constant Cauchy stress for non-constant stretches being a rather strange phenomenon in idealized nonlinear elasiticity, we are not yet in a position to judge on the appropriateness of such constitutive assumptions in nonlinear hyperelasticity.
%%%%%%%%%%%%%%%%%%%%%%%%%%%%%%%%%%%%%%%%%%%%%
\section*{Acknowledgement}
The last author is indepted to Prof. Konstantin Volokh (Technion, Haifa) for interesting discussions concerning the relevance of our counterexample.
%%%%%%%%%%%%%%%%%%%%%%%%%%%%%%%%%%%%%%%%%%%%%%%
\begin{appendix}
\begin{footnotesize}
\section{Necessary and sufficient conditions for rank-one convexity}
\label{appendix:knowlesSternberg}

\renewcommand{\dd}{\displaystyle}

\noindent
The following ellipticity condition for the planar case is due to Knowles and Sternberg {\footnotesize \cite[p.~9]{knowles1976failure} (cf.\ \cite{knowles1978failure,silhavy1997mechanics})}.

\begin{lemma}[{Knowles and Sternberg \cite{knowles1976failure,knowles1978failure}, cf.\ \cite[p.~308]{silhavy1997mechanics}}]
\label{lemma:knowlesSternberg}
	Let $W\col\GLp(2)\to\R$ be an objective and isotropic function with $W(F)=g(\lambda_1,\lambda_2)$ for all $F\in\GLp(2)$ with singular values $\lambda_1,\lambda_2$, where $g\col\Rp^2\to\R$ is two-times continuously differentiable. Then $W$ is Legendre-Hadamard elliptic on $\GLp(2)$ if and only if $g$ satisfies the following conditions for all $(\lambda_1,\lambda_2)\in\Rp^2$:
	\begin{alignat*}{2}
		\text{i)}&\qquad g_{11}\geq 0 \qquad\text{and}\qquad g_{22}\geq0\,,\\
		\text{ii)}&\qquad \frac{\lambda_1\.g_1-\lambda_2\.g_2}{\lambda_1-\lambda_2}\geq 0
			&&\qquad\text{if }\; \lambda_1\neq \lambda_2\,, \hspace*{.21\textwidth}\\
		\text{iii)}&\qquad g_{11}-g_{12}+\frac{g_1}{\lambda_1}\geq 0 \quad\text{ and }\quad g_{22}-g_{12}+\frac{g_2}{\lambda_2}\geq 0
			&&\qquad\text{if }\; \lambda_1=\lambda_2\,,\\
		\text{iv)}&\qquad \sqrt{g_{11}\,g_{22}}+g_{12}+\frac{g_1-g_2}{\lambda_1-\lambda_2}\geq 0
			&&\qquad\text{if }\; \lambda_1\neq \lambda_2\,,\\
		\text{v)}&\qquad \sqrt{g_{11}\,g_{22}}-g_{12}+\frac{g_1+g_2}{\lambda_1+\lambda_2}\geq 0\,,
	\end{alignat*}
	where $g_i=\pdd{g}{\lambda_i}(\lambda_1,\lambda_2)$ and $g_{ij}=\pdd[2]{g}{\lambda_1\,\partial\lambda_2}(\lambda_1,\lambda_2)$.
	Furthermore, if all the above inequalities are strict, then $W$ is strictly elliptic.
\end{lemma}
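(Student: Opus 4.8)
The plan is to verify Legendre--Hadamard ellipticity directly from its definition, namely that $t\mapsto W(F+t\,\xi\otimes\eta)$ is convex at $t=0$ for every $F\in\GLp(2)$ and all $\xi,\eta\in\R^2$, and to translate the resulting second-variation inequality into pointwise conditions on $g$. First I would exploit objectivity and isotropy to reduce to a diagonal argument: writing $F=Q_1\operatorname{diag}(\lambda_1,\lambda_2)Q_2^T$ with $Q_1,Q_2\in\SO(2)$ and using $W(Q_1AQ_2^T)=W(A)$, one gets $W(F+t\,\xi\otimes\eta)=W(\operatorname{diag}(\lambda_1,\lambda_2)+t\,(Q_1^T\xi)\otimes(Q_2^T\eta))$, so it suffices to check the inequality at $F=\operatorname{diag}(\lambda_1,\lambda_2)$ for an arbitrary rank-one $H=\xi\otimes\eta$. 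The object to control is then $\phi''(0)$ for $\phi(t)=g(\lambda_1(t),\lambda_2(t))$, where $\lambda_i(t)$ are the singular values of $F+tH$.

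The central computation is the second-order perturbation of the singular values. Since $\lambda_i(t)^2$ are the eigenvalues of $(F+tH)^T(F+tH)=F^TF+t\,(F^TH+H^TF)+t^2H^TH$ and $F^TF=\operatorname{diag}(\lambda_1^2,\lambda_2^2)$ is diagonal, standard symmetric-eigenvalue perturbation theory yields $\dot\lambda_i=H_{ii}$ together with an explicit $\ddot\lambda_i$ whose off-diagonal contribution is governed by $(\lambda_1H_{12}+\lambda_2H_{21})^2/(\lambda_1^2-\lambda_2^2)$. Inserting these into $\phi''(0)=\sum_{i,j}g_{ij}\dot\lambda_i\dot\lambda_j+\sum_i g_i\ddot\lambda_i$ produces a quadratic form in the four entries of $H$ that splits additively into a \emph{diagonal block} in $(H_{11},H_{22})$ with matrix $\bigl(\begin{smallmatrix}g_{11}&g_{12}\\g_{12}&g_{22}\end{smallmatrix}\bigr)$ and an \emph{off-diagonal block} in $(H_{12},H_{21})$ whose coefficients are rational in $\lambda_1,\lambda_2$ and $g_1,g_2$. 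A sanity check already fixes condition ii): for the pure shear $H=e_1\otimes e_2$ the off-diagonal block collapses to $\tfrac{\lambda_1g_1-\lambda_2g_2}{(\lambda_1-\lambda_2)(\lambda_1+\lambda_2)}$, whose non-negativity is exactly the Baker--Ericksen inequality ii).

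It then remains to impose the rank-one constraint $H_{11}H_{22}=H_{12}H_{21}$ and to characterise non-negativity of the form over all rank-one $H$. The directions $H=e_1\otimes e_1$ and $H=e_2\otimes e_2$ isolate condition i); the shear directions give ii) as above; and coupling the two blocks through the rank-one relation, fixing the common product $H_{11}H_{22}=H_{12}H_{21}=m$ and applying the estimate $g_{11}H_{11}^2+g_{22}H_{22}^2\geq2\sqrt{g_{11}g_{22}}\,\lvert m\rvert$ (valid once i) holds), produces after optimisation over $m$ and the relative signs precisely the mixed conditions iv) and v) with their $\sqrt{g_{11}g_{22}}\pm g_{12}$ structure. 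I expect the main obstacle to be twofold. The first is getting the off-diagonal coefficients of the second variation exactly right: this is the genuinely technical perturbation-theoretic step, and sign errors in the $1/(\lambda_1\pm\lambda_2)$ terms would propagate into the wrong final inequalities. The second is the degenerate case $\lambda_1=\lambda_2$, in which those denominators vanish; here I would pass to the limit $\lambda_2\to\lambda_1$ by Taylor-expanding $g_1-g_2$ and $\lambda_1g_1-\lambda_2g_2$, replacing the difference quotients by derivatives and recovering exactly the coincident-value conditions iii). Finally, sufficiency follows by observing that the minimum of the now fully explicit form over the one-parameter family of rank-one directions is attained precisely where i)--v) are tight, and that strictness of all five inequalities forces $\phi''(0)>0$, which gives the strict-ellipticity addendum.
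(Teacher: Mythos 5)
You should first be aware that the paper contains \emph{no proof} of this lemma: it is quoted as a classical result of Knowles and Sternberg, with the strictness addendum credited to additional calculations by Silhavy, and the authors simply cite the literature. Your blind attempt is therefore not an alternative to anything in the paper but a reconstruction of the original Knowles--Sternberg argument, and it is a faithful and essentially correct one. The spot-checkable claims are right: after reducing to $F=\operatorname{diag}(\lambda_1,\lambda_2)$ by objectivity and isotropy, first-order perturbation gives $\dot\lambda_i=H_{ii}$, the second-order off-diagonal contribution is indeed governed by $(\lambda_1H_{12}+\lambda_2H_{21})^2/(\lambda_1^2-\lambda_2^2)$, and the second variation splits as $Q(H)=g_{11}H_{11}^2+2g_{12}H_{11}H_{22}+g_{22}H_{22}^2+A\,(H_{12}^2+H_{21}^2)+2B\,H_{12}H_{21}$ with $A=\frac{\lambda_1g_1-\lambda_2g_2}{\lambda_1^2-\lambda_2^2}$ and $B=\frac{\lambda_2g_1-\lambda_1g_2}{\lambda_1^2-\lambda_2^2}$; your shear check $Q(e_1\otimes e_2)=A$ recovers ii), the diagonal directions give i), and since $A+B=\frac{g_1-g_2}{\lambda_1-\lambda_2}$ and $A-B=\frac{g_1+g_2}{\lambda_1+\lambda_2}$, your case split over the sign of $m=H_{11}H_{22}=H_{12}H_{21}$ combined with $g_{11}H_{11}^2+g_{22}H_{22}^2\geq 2\sqrt{g_{11}g_{22}}\,\lvert m\rvert$ and $A(H_{12}^2+H_{21}^2)\geq 2A\lvert m\rvert$ yields exactly $Q\geq 2\lvert m\rvert\bigl(\sqrt{g_{11}g_{22}}\pm g_{12}+(A\pm B)\bigr)$, i.e.\ conditions iv) and v), with $m=0$ handled by i) and ii) alone; attainability of the equality cases gives necessity. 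What separates your sketch from a complete proof: (a) you never actually write out $A$ and $B$, and as you yourself anticipate, the signs there are the one place where errors would propagate; (b) necessity of iv) and v) requires the equality analysis including the degenerate case $g_{11}g_{22}=0$, where the lower bound is only approached in a limit; (c) at $\lambda_1=\lambda_2$ the eigenvalue perturbation degenerates, and your limiting remedy is the correct one, but note that $A\to\frac12\bigl(g_{11}-g_{12}+\frac{g_1}{\lambda_1}\bigr)$, so iii) arises as the coincident-value limit of ii) rather than of iv), whose limit $\sqrt{g_{11}g_{22}}+g_{11}\geq 0$ is already implied by i); and (d) for the strictness addendum, strict i)--v) give $Q(H)>0$ for every rank-one $H\neq0$, which by homogeneity and compactness yields a constant $c^+>0$ depending on $F$ --- the pointwise sense in which the paper uses strict ellipticity (cf.\ the $F$-dependent constant in their three-dimensional example).
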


\section{Some observations on strict convexity}
\label{appendix:strictConvexity}
In order to prove Corollary \ref{corollary:strictEllipticityIsochoric}, we will discuss some criteria for the \emph{strict} convexity of real-valued functions.
In the following, we will assume that $C\subset V$ is a convex subset of a vector space $V$ and $I\subset\R$ is an interval.

\begin{definition}
\label{definition:semiStrictConvexity}
	A function $f\col C\to\R$ is called \emph{semi-strictly convex} if
	\begin{itemize}
		\item[i)] $f$ is convex and
		\item[ii)] if $f(a\.x+(1-a)y)=a\.f(x)+(1-a)\.f(y)$ for some $a\in(0,1)$, then $f(x)\neq f(y)$.
	\end{itemize}
\end{definition}

\begin{remark}
	A convex function is semi-strict convexity if and only if it is not constant along straight lines. Note that strict convexity can similarly be expressed as the non-\emph{linearity} along straight lines. It is easy to see that an analogous concept can be applied to rank-one convexity, although a separate definition seems to be redundant (instead, the definition can simply be applied to the mapping's restriction to rank-one lines).
\end{remark}

\begin{lemma}
\label{lemma:semstrictConvexityImpliedByStrictComposition}
	If $f\col C\to R\subset\R$ is convex and there exists $g\col R\to\R$ such that $g\circ f$ is strictly convex, then $f$ is semi-strictly convex.
\end{lemma}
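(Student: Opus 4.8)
The plan is to establish the only nontrivial part of the conclusion, namely clause (ii) of Definition~\ref{definition:semiStrictConvexity}, by contradiction; clause (i) is immediate since convexity of $f$ is a hypothesis. The guiding idea is that strict convexity of the composition $g\circ f$ forbids $g\circ f$ from being affine along any non-degenerate segment, whereas constancy of $f$ along such a segment would force $g\circ f$ to be constant, hence affine, there.

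First I would fix distinct points $x,y\in C$ and some $a\in(0,1)$ for which the affine equality $f(a\.x+(1-a)\.y)=a\.f(x)+(1-a)\.f(y)$ holds, and assume toward a contradiction that $f(x)=f(y)\equalscolon v$. Since $C$ is convex, the point $a\.x+(1-a)\.y$ again lies in $C$, so $g\circ f$ is defined there. From $f(x)=f(y)=v$ the right-hand side of the affine equality equals $v$, so that $f(a\.x+(1-a)\.y)=v$ as well; thus $f$ takes the single value $v$ at both endpoints and at the chosen interior point of the segment joining $x$ and $y$.

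Pushing these three coincident $f$-values through $g$ then yields $(g\circ f)(x)=(g\circ f)(y)=(g\circ f)(a\.x+(1-a)\.y)=g(v)$, whence
\[
	(g\circ f)(a\.x+(1-a)\.y)=g(v)=a\.(g\circ f)(x)+(1-a)\.(g\circ f)(y)\,.
\]
Because $x\neq y$ and $a\in(0,1)$, this equality contradicts the strict convexity of $g\circ f$, which demands a strict inequality at every interior point of a non-degenerate segment. Hence $f(x)\neq f(y)$, which is precisely clause (ii). I do not anticipate a genuine obstacle: the argument invokes only the defining inequalities and requires no regularity of $g$. The sole point deserving care is the implicit restriction to $x\neq y$ in clause (ii) — for $x=y$ the clause is vacuous — which is exactly the configuration in which strict convexity of $g\circ f$ supplies the needed strict inequality.
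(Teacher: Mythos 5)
Your proof is correct and takes essentially the same approach as the paper's: assuming the affine equality together with $f(x)=f(y)$, you push the three coincident values of $f$ through $g$ to produce an affine (indeed constant) equality for $g\circ f$ along a non-degenerate segment, contradicting its strict convexity. As a minor aside, the paper's own proof ends by citing \enquote{the strict convexity of $g$}, which is a slip --- the hypothesis is strict convexity of $g\circ f$ --- and your version states this correctly, as well as handling the implicit restriction to $x\neq y$ explicitly.
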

\begin{proof}
	Let $a\in(0,1)$ and $x\neq y$ for $x,y\in C$. If $f(a\.x+(1-a)y)=a\.f(x)+(1-a)\.f(y)$ and $f(x)=f(y)$, then
	\[
		g(f(a\.x+(1-a)y)) = g(a\.f(x)+(1-a)\.f(y)) = g(f(x)) = a\.g(f(x))+(1-a)\.g(f(y))
	\]
	in contradiction to the strict convexity of $g$.
\end{proof}

\begin{lemma}
\label{lemma:semstrictConvexityImpliesStrictCompositions}
	If $f\col C\to R\subset\R$ is semi-strictly convex and $h\col R\to\R$ is strictly convex and (strictly) monotone, then $h\circ f$ is strictly convex.
\end{lemma}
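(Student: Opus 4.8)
The plan is to verify the defining inequality of strict convexity for $h\circ f$ directly. Fix $x,y$ in the domain $C$ with $x\neq y$ and $a\in(0,1)$, and write $z\colonequals a\,x+(1-a)\,y$ together with $M\colonequals a\,f(x)+(1-a)\,f(y)$; the goal is the strict inequality $h(f(z))<a\,h(f(x))+(1-a)\,h(f(y))$. Convexity of $f$ immediately yields $f(z)\leq M$, and the entire argument then hinges on a case distinction according to whether this inequality is strict or an equality. (Throughout I read \enquote{monotone} in the increasing sense, so that monotonicity cooperates with, rather than reverses, the ordering $f(z)\leq M$.)

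First I would treat the strict case $f(z)<M$. Here the idea is simply to transport the strict inequality through $h$: since $h$ is strictly increasing, monotonicity gives $h(f(z))<h(M)$, and then convexity of $h$ applied to the two points $f(x),f(y)\in R$ gives $h(M)\leq a\,h(f(x))+(1-a)\,h(f(y))$. Chaining these two estimates produces the desired strict inequality. Note that in this branch I use only that $h$ is convex and strictly monotone; the strict convexity of $h$ is not needed here. Second, in the equality case $f(z)=M$ the hypothesis of semi-strict convexity does the essential work: equality means the convexity inequality for $f$ is saturated along the segment from $x$ to $y$, so condition (ii) of Definition \ref{definition:semiStrictConvexity} forces $f(x)\neq f(y)$. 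With two genuinely distinct arguments in hand, strict convexity of $h$ yields $h(M)<a\,h(f(x))+(1-a)\,h(f(y))$, and since $h(f(z))=h(M)$ this is already the required strict inequality. Combining the two cases establishes strict convexity of $h\circ f$.

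The step I would watch most carefully is the equality case. It is tempting to expect strict convexity of $f$ itself, but $f$ is only semi-strictly convex and may well be affine along some straight segments; thus the saturated relation $f(z)=M$ is not contradictory on its own. The point is that semi-strict convexity rules out $f$ being \emph{constant} on $[x,y]$, so it forces $f(x)\neq f(y)$, and only after this separation is combined with the genuine strict convexity of $h$ does the strict inequality emerge. This interplay—monotonicity carrying the strict case, semi-strict convexity feeding the strict convexity of $h$ in the equality case—is the heart of the argument, and it is also where the precise placement of the three hypotheses (convexity of $f$, strict monotonicity of $h$, strict convexity of $h$) must be respected.
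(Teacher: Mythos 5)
Your proof is correct, and it assembles the same three ingredients as the paper but along a genuinely different case decomposition. Writing $z=a\,x+(1-a)\,y$ and $M=a\,f(x)+(1-a)\,f(y)$ as you do, the paper splits on the \emph{endpoint values}: if $f(x)\neq f(y)$ it chains $h(f(z))\leq h(M)$ (convexity of $f$ plus monotonicity of $h$) with $h(M)<a\,h(f(x))+(1-a)\,h(f(y))$ (strict convexity of $h$ at the distinct points $f(x),f(y)$), and if $f(x)=f(y)$ it uses semi-strictness to get $f(z)<M$ and pushes that strict inequality through $h$, the right-hand side being exactly $h(M)$. You instead split on whether the convexity inequality for $f$ \emph{saturates} ($f(z)=M$ or $f(z)<M$), which swaps the hypothesis carrying the strictness in each branch: strict monotonicity in the non-saturated branch, semi-strictness feeding strict convexity of $h$ in the saturated one. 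The partitions are not the same --- e.g.\ the configuration $f(x)\neq f(y)$ with $f(z)<M$ lands in the paper's strict-convexity branch but in your monotonicity branch. What the paper's arrangement buys is that it works for $h$ merely non-decreasing, which is presumably the point of the parenthetical \enquote{(strictly)} in the statement: its strict inequality in the unequal-endpoint case comes from strict convexity alone, and in the equal-endpoint case one notes that $h(f(z))=h(M)$ with $f(z)<M$ would force the strictly convex $h$ to be constant on a nondegenerate interval. Your branch A, as written, leans on strict increase; under mere non-decrease it needs exactly that same easy constancy observation, so the gap is cosmetic. Finally, your explicit remark that \enquote{monotone} must be read in the increasing sense is well taken and not pedantry: for decreasing $h$ the statement is actually false (take $f(x)=\abs{x}$, which is semi-strictly convex, and $h(t)=e^{-t}$, strictly convex and strictly decreasing; then $(h\circ f)(x)=e^{-\abs{x}}$ is not convex), and the paper's own inequality $h(f(z))\leq h(M)$ likewise tacitly uses the non-decreasing direction.
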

\begin{proof}
	Let $a\in(0,1)$ and $x\neq y$ for $x,y\in C$.
	
	If $f(x)\neq f(y)$, then
	\[
		h(f(a\.x+(1-a)y)) \leq h(a\.f(x)+(1-a)\.f(y)) < a\.h(f(x))+(1-a)\.h(f(y))
	\]
	due to the strict monotonicity the convexity of $f$ and the monotonicity and strict convexity of $h$.
	
	If $f(x)=f(y)$, then $f(a\.x+(1-a)y)< a\.f(x)+(1-a)\.f(y)$ due to the semi-strict convexity of $f$ and thus
	\[
		h(f(a\.x+(1-a)y)) < h(a\.f(x)+(1-a)\.f(y)) = h(f(x)) = a\.h(f(x))+(1-a)\.h(f(y))
	\]
	due to the strict convexity of $h$.
\end{proof}

\begin{lemma}
	If $f\col I\to\R$ is analytic and convex on an interval $I\subset\R$, then $f$ is either strictly convex or linear.
\end{lemma}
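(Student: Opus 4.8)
The plan is to exploit the standard dichotomy that a convex function fails to be strictly convex precisely when it is affine on some subinterval, and then to propagate this local linearity to all of $I$ using the rigidity of analytic functions.

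First I would record that since $f$ is analytic it is in particular $C^2$ on the interior of $I$, so convexity is equivalent to $f''\geq 0$ there. Next, suppose $f$ is \emph{not} strictly convex; then there exist $x<y$ in $I$ and some $a\in(0,1)$ with $f(ax+(1-a)y)=a\.f(x)+(1-a)\.f(y)$. I would then show that equality of $f$ with its chord at a single interior point forces $f$ to coincide with that chord on the entire segment $[x,y]$: if $f$ dipped strictly below the chord at some intermediate point $z_1$, one could write the point $z_0=ax+(1-a)y$ as a convex combination of $z_1$ and an endpoint and apply convexity once more to contradict the assumed equality at $z_0$. Consequently $f$ is affine on $[x,y]$, and hence $f''\equiv 0$ on the open subinterval $(x,y)$.

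The decisive step is then to invoke analyticity. The second derivative $f''$ is itself real-analytic on the interior of $I$, and it vanishes on the set $(x,y)$, which has accumulation points in the interior of $I$. Since an interval is connected, the identity theorem for real-analytic functions forces $f''\equiv 0$ throughout the interior of $I$; integrating twice (and using continuity up to any endpoints of $I$) shows that $f$ is affine, i.e.\ linear, on all of $I$.

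I expect the genuinely essential point — the place where analyticity cannot be weakened to mere smoothness — to be this final propagation via the identity theorem. A merely $C^\infty$ convex function can perfectly well be affine on part of its domain while remaining strictly convex elsewhere, so it is exactly the analytic rigidity that yields the stated dichotomy. The only computational subtlety worth verifying carefully is the chord-coincidence claim of the second step, which reduces to a short application of the convexity inequality on the two sub-segments determined by $z_0$.
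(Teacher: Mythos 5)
Your proof is correct and takes essentially the same route as the paper, whose entire proof is the one-line dichotomy that for analytic $f$ with $f''\geq0$ on $I$, either $f''\equiv0$ or $f''>0$ on the interior — i.e.\ the same appeal to analytic rigidity of $f''$ that you make via the identity theorem. Your fleshed-out contrapositive version (a single chord equality forces $f$ to be affine on a subinterval, hence $f''\equiv 0$ on an open set, hence everywhere) is in fact slightly more careful than the paper's phrasing, since the literal statement ``$f''\equiv0$ or $f''>0$ on the interior'' overlooks analytic convex functions with isolated zeros of $f''$ such as $f(x)=x^4$, a wrinkle your argument sidesteps entirely.
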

\begin{proof}
	If $f$ is analytic with $f''\geq0$ on the interval $I$, then either $f''\equiv0$ or $f''>0$ on the interior of $I$.
\end{proof}

\begin{corollary}
	If $f\col \R^n\to\R$ is analytic and convex, then $f$ is either strictly convex or the \emph{restriction} of $f$ \emph{to some line} in $\R^n$ is linear.
\end{corollary}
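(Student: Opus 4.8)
The plan is to reduce this $n$-dimensional statement to the preceding one-dimensional Lemma by restricting $f$ to a suitable straight line. First I would unfold the hypothesis that $f$ fails to be strictly convex: by definition this means there exist distinct points $x,y\in\R^n$ and a parameter $a\in(0,1)$ for which the convexity inequality degenerates into an equality, namely $f(a\.x+(1-a)\.y)=a\.f(x)+(1-a)\.f(y)$. A single such degenerate triple is all the failure of strict convexity provides, but it is exactly what the argument needs.

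Next I would pass to the line through $x$ and $y$ by introducing the scalar function $g\col\R\to\R$, $g(t)\colonequals f\bigl((1-t)\.x+t\.y\bigr)$. Since $t\mapsto(1-t)\.x+t\.y$ is affine and $f$ is analytic, $g$ is analytic on $\R$; and since the restriction of a convex function to a line is convex, $g$ is convex on $\R$. Setting $t_0\colonequals 1-a\in(0,1)$, the equality above rewrites as $g(t_0)=(1-t_0)\.g(0)+t_0\.g(1)$, so $g$ attains equality in its convexity inequality at an interior parameter and is therefore \emph{not} strictly convex.

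Now I would invoke the preceding Lemma applied to $g$ on the interval $I=\R$: an analytic convex function on an interval is either strictly convex or linear. As $g$ is not strictly convex, it must be linear on all of $\R$. But $g$ is precisely the restriction of $f$ to the line $\ell=\{(1-t)\.x+t\.y\setvert t\in\R\}$, so $f|_\ell$ is linear, which is exactly the claimed alternative.

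I do not anticipate a genuine obstacle, since all the analytic content lives in the scalar Lemma and the corollary is a clean dimensional reduction. The only points that warrant a little care are the bookkeeping of the convex-combination parameters, so that the degeneracy of $f$ at $(x,y,a)$ transfers to a degeneracy of $g$ at the interior point $t_0=1-a$, and fixing the convention that strict convexity of $f$ is understood as strict inequality for all $x\ne y$ and all $t\in(0,1)$, so that one degenerate triple already produces a line along which $f$ is affine.
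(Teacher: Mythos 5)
Your proof is correct and is precisely the argument the paper leaves implicit: the corollary is stated without proof as an immediate consequence of the preceding lemma, and the intended reduction is exactly your restriction of $f$ to the line through the degenerate pair $x,y$, yielding an analytic convex function $g$ on $\R$ that fails strict convexity and is hence linear by the lemma. Your bookkeeping of the parameter $t_0=1-a$ is sound, so nothing is missing.
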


\begin{remark}
	Observe that the (analytic) mapping $F\mapsto\frac{\lambdamax}{\lambdamin}$ is linear on the \enquote{rank-one-(half-)line} $\id+\R\cdot e_1\otimes e_1$, but not constant on this line (which would contradict the semi-strict convexity).
\end{remark}

The above observations allow us to prove Corollary \ref{corollary:strictEllipticityIsochoric} by explicitly demonstrating the strictness of a single composition with the mapping $F\mapsto\frac{\lambdamax}{\lambdamin}$.

	\begin{corollary*}[Corollary \ref{corollary:strictEllipticityIsochoric}]
		Let $W\col\GLp(2)\to\R$ be conformally invariant, and let $h\col\Rp\to\R$ and $g\col\Rp\times\Rp\to\R$ denote the uniquely determined functions with
		$W(F)=g(\lambda_1,\lambda_2)=h\bigl(\frac{\lambda_1}{\lambda_2}\bigr)$
		for all $F\in\GLp(2)$ with singular values $\lambda_1,\lambda_2$. Then the following are equivalent:
		\begin{itemize}
		\item[i)] $W$ is strictly rank-one convex, \qquad\qquad\qquad iii) $h$ is strictly convex on $\Rp$,
		\item[ii)] $g$ is separately strictly convex, \qquad\qquad\qquad iv) $h$ is strictly convex and increasing on $[1,\infty)$.
		\end{itemize}
	\end{corollary*}
%\addtocounter{theoremCounter}{-1}
%\let\thetheoremCounter\thetheoremCounterOld
%
\begin{proof}
i) $\implies$ ii): Assume $W$ to be strictly rank-one convex. Then
\begin{align}
	g(1+t,a)
	&=W \matr{
			1+t & 0\\
			0 & a		
		}
		= W \left(
		\matr{
			1 & 0\\
			0 & a			
		} + t \cdot e_1 \otimes e_1 \right)\notag\\ 
		&< (1-t) \cdot W
		\matr{
			1 & 0\\
			0 & a
		} +t \cdot W \left(
		\matr{
			1 & 0\\
			0 & a
		} + e_1 \otimes e_1 \right)\notag
		\;=\; (1-t) \cdot g(1,a) + t \cdot g(2,a).						
\end{align}
Thus the function $ s \mapsto g(s,a)$ is strictly convex which, due to the symmetry of $g$, implies that $g$ is separately convex.\\
ii) $\implies$ iii): If $g$ is separately strictly convex, then for $\lambda_1 =1$, $\lambda_2=(1-t) \cdot x + t \cdot y$ and $t \in [0,1]$, we find
\begin{align*}
	h((1-t) \cdot x + t \cdot y) = g((1-t) \cdot x + t \cdot y,1) < (1-t) \cdot g(x,1) + t \cdot g(y,1) = (1-t) \cdot h(x) + t \cdot h(y)\,,
\end{align*}
thus $h$ is strictly convex.\\
iii) $\implies$ iv): It suffices to note that the convexity of $h$ on $\Rp$ implies that $h$ is increasing on $[1,\infty)$ according to Theorem \ref{theorem:ellipticityIsochoric}.
iv) $\implies$ i): First, we show that the energy induced by
$\widetilde{g}(\lambda_1, \lambda_2) = \bigl( \frac{\max\{\lambda_1, \lambda_2\}}{\min\{ \lambda_1, \lambda_2 \}} - 1 \bigr)^2$
is strictly rank-one convex. Assume without loss of generality that $\lambda_1\geq\lambda_2$. Then
$\widetilde{g}(\lambda_1,\lambda_2) = \bigl( \frac{\lambda_1}{\lambda_2} - 1 \bigr)^2$,
and after some computation, we find
\begin{align*}
	\gtilde_{11}
		\;=\; \frac{2}{\lambda_2^2}
		\;>\; \,0
	\qquad&\,\text{and}\,\qquad
	\gtilde_{22}
		\;=\; \frac{2\.\lambda_1\.(3\.\lambda_1-2\.\lambda_2)}{\lambda_2^4}
		\;>\; 0\,,
	\\
	\frac{\lambda_1\.\gtilde_1-\lambda_2\.\gtilde_2}{\lambda_1-\lambda_2}
		&\;=\; \frac{4\.\lambda_1}{\lambda_2^2}
		\;>\; 0\,,
	\\
	\gtilde_{11}-\gtilde_{12}+\frac{\gtilde_1}{\lambda_1}
		&\;=\; \frac{2\.(\lambda_1+\lambda_2)\.(2\.\lambda_1-\lambda_2)}{\lambda_1\.\lambda_2^3}
		\;>\; 0\,,
	\\
	\gtilde_{22}-\gtilde_{12}+\frac{\gtilde_2}{\lambda_2}
		&\;=\; \frac{2\.(\lambda_1+\lambda_2)\.(2\.\lambda_1-\lambda_2)}{\lambda_2^4}
		\;>\; 0\,,
	\\
	\sqrt{\gtilde_{11}\,\gtilde_{22}}+\gtilde_{12}+\frac{\gtilde_1-\gtilde_2}{\lambda_1-\lambda_2}
		&\;=\; 2\,\frac{2\.\lambda_2-\lambda_1+\sqrt{3\.\lambda_1^2-2\.\lambda_1\.\lambda_2}}{\lambda_2^3}
		\;>\; 0\,,
	\\
	\sqrt{\gtilde_{11}\,\gtilde_{22}}-\gtilde_{12}+\frac{\gtilde_1+\gtilde_2}{\lambda_1+\lambda_2}
		&\;=\; 2\,\frac{3\.\lambda_1\.\lambda_2-2\.\lambda_2^2+\lambda_1^2+\sqrt{3\.\lambda_1^4-2\.\lambda_1^3\.\lambda_2}+\lambda_2\.\sqrt{3\.\lambda_1^2-2\.\lambda_1\.\lambda_2}}{\lambda_1\.\lambda_2^3+\lambda_2^4}
		\;>\; 0\,.
\end{align*}
According to the Knowles-Sternberg criterion (Lemma \ref{lemma:knowlesSternberg}), combined with additional calculations by Silhavy \cite{silhavy1997mechanics}, the function $W$ induced by $\gtilde$ is therefore strictly rank-one convex.

Now, for $F\in\GLp(2)$ and $\xi,\eta\in\R^2$, consider the function $f\colon[0,1]\to\R$ with $f(t)\colonequals K(F+t(\xi\otimes\eta))$, where $K(F)=\frac{\lambdamax}{\lambdamin}$. Then $f$ is convex due to Theorem \ref{theorem:ellipticityIsochoric} v) with $h\colonequals\operatorname{id}$. The above computations show that for $g\colon\R\to\R$ with
$g(s)=(s-1)^2$,
the composition $g\circ f$ is strictly convex. Therefore, according to Lemma \ref{lemma:semstrictConvexityImpliedByStrictComposition}, the function $f$ is semi-strictly convex (cf.\ Definition \ref{definition:semiStrictConvexity}).

Finally, under the assumption that $h$ is strictly convex and increasing on $[1,\infty)$, Lemma \ref{lemma:semstrictConvexityImpliesStrictCompositions} states that the mapping
\[
	h\circ f\col[0,1]\to\R\,,\\
	t\mapsto h(f(t)) = h(K(F+t(\xi\otimes\eta)) = W(F+t(\xi\otimes\eta))
\]
is strictly convex for arbitrary $F\in\GLp(2)$ and $\xi,\eta\in\R^2$, which directly implies the strict rank-one convexity of $W$.
\end{proof}

\end{footnotesize}
\end{appendix}

{\footnotesize \printbibliography}

\end{document}